\theoremstyle{theorem}
\newtheorem{theorem}{Theorem}
\newtheorem{corollary}{Corollary}
\newtheorem{pro}{Proposition}
\newtheorem{note}{Note}
\theoremstyle{definition}
\newtheorem{definition}{Definition}
\newtheorem*{remark}{Remark}
\newtheorem{example}{Example}
\begin{document}
	
	\title{\textbf{Statistical Compactness}}
	\author{Manoranjan Singha and Ujjal Kumar Hom}
	
	\date{}
	\maketitle
	{\let\thefootnote\relax\footnotetext{{MSC: Primary 54A20, Secondary 40A35.\\Department of Mathematics, University of North Bengal, Raja Rammohunpur, Darjeeling-734013, West Bengal, India.\\ Email address: manoranjan.math@nbu.ac.in, rs\_ujjal@nbu.ac.in}}}
	\begin{abstract}
		Organising the relevant literature and by letting statistical convergence play the main role in the theory of compactness, a variant of compactness called statistical compactness has been achieved. As in case of sequential compactness, one point statistical compactification is studied to some extent too.
	\end{abstract}

	\noindent
\section{Introduction}
The idea of statistical convergence of real numbers was introduced by H. Fast in \cite{1} and H. Steinhaus in \cite{2}. Later this idea is generalized and exhibited in many papers (e.g. \cite{4},\cite{5},\cite{6},\cite{7},\cite{8},\cite{9},\cite{10},\cite{11},\cite{12},\cite{14}).

The concept of statistical convergence is an extension of the usual convergence of sequence and is based on the notion of asymptotic density \cite{14} of subset of natural numbers $\mathbb{N}$. If $A\subset \mathbb{N}$, denote the cardinality of $A$ by $|A|$ and $d_n(A)=\frac{|\{m\in \mathbb{N}:m\in A\cap\{1,2,...,n\} \}|}{n}$. The numbers
\begin{center}
	$\underline{d}(A) = \displaystyle{\liminf_{n\rightarrow\infty}}$ $d_n(A)$ and $\overline{d}(A) = \displaystyle{\limsup_{n\rightarrow\infty}}$ $d_n(A)$
\end{center}
are called the lower and upper asymptotic density of A, respectively. If $\underline{d}(A) = \overline{d}(A)$, then $d(A)=\overline{d}(A)$ is called asymptotic density or natural density of $A$. As defined by Fridy in \cite{9}, a subsequence $(x_n)_{n\in K}$ of $(x_n)_{n\in \mathbb{N}}$ is called thin subsequence if $d(K)=0$ otherwise $(x_n)_{n\in K}$ is called nonthin subsequence of $(x_n)_{n\in \mathbb{N}}$. In \cite{3}, Brown introduced one point sequential compactification. In this paper statistical compactness, a variant of compactness where statistical convergence of nonthin subsequences plays the prime role, is defined and the notion of one point statistical compactification is developed using statistical compact sets.

	\section{Main results}
	Let's begin with a difference: unlike usual convergence, even nonthin subsequence of a statiatically convergent sequence may fail to be statistical convergent. For, let's define a sequence $(\mathbf{a}_n)_{n\in \mathbb{N}}$ as follows:\newline
	Suppose $A=\displaystyle{\bigcup_{k=2}^{\infty}A_k}$ where $A_k=\{k^{k^{2}}+1,k^{k^{2}}+2,...,k^{k^{2}+1}\}$. Since $\displaystyle{\lim_{k\to \infty}}d_{k^{k^{2}}+1}(A)$\\$=0$ and $\displaystyle{\lim_{k\to \infty}d_{k^{k^{2}+1}}(A)=1}$, $\underline{d}(A)=0$ and $\overline{d}(A)=1$ respectively. \newline
	Define a strictly increasing function $f:\mathbb{N}\to \mathbb{N}$ by $f(i) = i$, $1\leqslant i\leqslant 16$ and $k\geqslant 2$, $f(k^{k^{2}}+n) = {(k+1)}^{k^{2}+3}+n$, $1\leqslant n\leqslant {(k+1)}^{{(k+1)}^{2}}-k^{k^{2}}$. For ${(n+1)}^{n^{2}+3}+1\leqslant k\leqslant {(n+2)}^{{(n+1)}^{2}+3}$,\begin{center}
		$d_k(A)\leqslant \frac{\displaystyle{\sum_{r=2}^{n}}r^{r^2}(r-1)}{{(n+1)}^{n^{2}+3}+1}$
	\end{center}
	which follows that $d(f(A))=0$. $\overline{d}(f(\mathbb{N}))=1$ as $\displaystyle{\lim_{n\to \infty}d_{r_n}(f(\mathbb{N}))=1}$ where $r_n={(n+1)}^{n^{2}+3}+{(n+1)}^{{(n+1)}^{2}}-n^{n^{2}}$. Define \begin{center}
		$\mathbf{a}_n=\begin{cases}0,& \text{if }n\in f(A)
			\\ 1, & \text{otherwise } \end{cases} $
	\end{center}
	Then $(\mathbf{a}_n)_{n\in \mathbb{N}}$ is statistically convergent to 1 but the nonthin subsequence $(\mathbf{a}_{f(k)})_{k\in \mathbb{N}}$ is not statistically convergent. Thus nonthin subsequence of a statistically convergent sequence may not be statistically convergent. This barrier can be removed in the following way:
	\begin{itemize}
		\item A sequence is a mapping whose domain is cofinal subset of $\mathbb{N}$. Suppose $(a_n)_{n\in M}$ is sequence in a topological space $X$ and $N$ is a cofinal subset of $M$. Call $(a_n)_{n\in N}$ is a subsequence of $(a_n)_{n\in M}$.
		\item Let's call a nonthin sequence $(a_n)_{n\in M}$ in a topological space $X$ is statistically convergent to $a\in X$ if for any open subset $U$ of $X$ containing $a$, $d(\{n\in M:a_n\notin U\})=0$.
	\end{itemize}

The following \textbf{Note \ref{n1}} also shows the urge of the above two definitions.
\begin{note}\label{n1}
Define a strictly increasing function $g:\mathbb{N}\to \mathbb{N}$ by $g(i) = i$, 1$\leqslant i\leqslant 16$ and for $k\geqslant 2$, $g(k^{k^{2}+1}+n) = {(k+1)}^{(k+1)^{2}+1}+n$, $1\leqslant n\leqslant {(k+1)}^{{(k+1)}^{2}}-k^{k^{2}+1}$ and $g(k^{k^{2}}+n) = {k}^{k^{2}+1}+k^{k^2}-{(k-1)}^{(k-1)^{2}+1}+n$, $1\leqslant n\leqslant k^{k^{2}}(k-1)$. Then $d(g(\mathbb{N}\backslash A))=0$ and $\displaystyle{\lim_{n\to \infty}}d_{s_n}(g(\mathbb{N}))=\frac{1}{2}$ where $s_n={2(n+1)}^{(n+1)^{2}+1}-n^{n^{2}+1}$. Define \begin{center}
	$\mathbf{b}_n=\begin{cases}0,& \text{if }n\in g(A)
		\\ 1, & \text{otherwise } \end{cases} $
\end{center} and \begin{center}
$\mathbf{x}_n=\begin{cases}0,& \text{if }n\in A
	\\ 1, & \text{otherwise. } \end{cases} $
\end{center}Now a fact is $\mathbf{x}_k=\mathbf{a}_{f(k)}=\mathbf{b}_{g(k)}$ for all $k\in \mathbb{N}$ but
\begin{itemize}
	\item[$\ast$] $(\mathbf{x}_n)_{n\in \mathbb{N}}$ is not statistically convergent.
	\item[$\ast$] $(\mathbf{a}_n)_{n\in f(\mathbb{N})}$ is statistically convergent to 1.
	\item[$\ast$] $(\mathbf{b}_n)_{n\in g(\mathbb{N})}$ is statistically convergent to 0.
\end{itemize}
\end{note}

	\begin{theorem}\label{th1}
		Let $X$ be a first countable space and $(x_n)_{n\in M}$ be a nonthin sequence in $X$. Then $(x_n)_{n\in M}$ is statistically convergent to $x\in X$ if and only if there exists a subset $N$ of $M$ such that $\overline{d}(M)=\overline{d}(N)$ and $(x_n)_{n\in N}$ converges to $x$.
	\end{theorem}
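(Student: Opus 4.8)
The plan is to mimic, in the relativized setting of the index set $M$, the classical \v{S}al\'at--Fridy characterization of statistical convergence. Since $X$ is first countable, I would first fix a countable neighbourhood base $\{U_k\}_{k\in\mathbb{N}}$ at $x$ and, replacing $U_k$ by $U_1\cap\cdots\cap U_k$ if necessary, arrange it to be decreasing, $U_1\supseteq U_2\supseteq\cdots$. For each $k$ set $M_k=\{n\in M: x_n\notin U_k\}$, so that the nesting of the $U_k$ forces $M_1\subseteq M_2\subseteq\cdots$. The whole argument then rests on converting the statistical smallness of each $M_k$ (a family of density-zero sets) into ordinary convergence along a single well-chosen $N\subseteq M$, and conversely.

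For the forward implication, statistical convergence gives $d(M_k)=0$ for every $k$. Hence I can pick thresholds $0=t_0<t_1<t_2<\cdots$ so that $d_n(M_k)<\tfrac{1}{k}$ whenever $n\ge t_k$. Define $N$ block by block: for $t_k<n\le t_{k+1}$ declare $n\in N$ precisely when $x_n\in U_k$, i.e. $N\cap(t_k,t_{k+1}]=(M\setminus M_k)\cap(t_k,t_{k+1}]$. Two things then need checking. First, $(x_n)_{n\in N}\to x$: given any open $U\ni x$ choose $m$ with $U_m\subseteq U$; for $n\in N$ with $n>t_m$ the block index $k$ satisfies $k\ge m$, whence $x_n\in U_k\subseteq U_m\subseteq U$, so only finitely many terms of $(x_n)_{n\in N}$ escape $U$. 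Second, the density computation: for $t_j<n\le t_{j+1}$ the monotonicity $M_k\subseteq M_j$ for $k\le j$ yields $(M\setminus N)\cap\{1,\dots,n\}\subseteq M_j\cap\{1,\dots,n\}$, so $d_n(M\setminus N)\le d_n(M_j)<\tfrac{1}{j}\to 0$. Thus $d(M\setminus N)=0$, and since deleting a density-zero set from $M$ changes neither its upper nor its lower density, we obtain $\overline{d}(N)=\overline{d}(M)$.

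The reverse implication is where I expect the genuine difficulty to lie. Assuming $N\subseteq M$ with $(x_n)_{n\in N}\to x$, ordinary convergence makes $\{n\in N: x_n\notin U\}$ finite for each open $U\ni x$, so that
\[ \{n\in M: x_n\notin U\}\subseteq \{n\in N: x_n\notin U\}\cup(M\setminus N) \]
sits inside a finite set together with $M\setminus N$. To conclude $d(\{n\in M:x_n\notin U\})=0$ I therefore need $M\setminus N$ to be thin. The main obstacle is that the bare equality $\overline{d}(N)=\overline{d}(M)$ does not by itself force $\overline{d}(M\setminus N)=0$, since an equality of upper densities is compatible with a complement of positive upper density; the delicate point is thus the bookkeeping that turns the stated density condition into thinness of $M\setminus N$. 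The natural way I would secure the implication is to run it with the stronger condition $d(M\setminus N)=0$, which the forward construction in any case produces, and under which the displayed containment immediately gives $\overline{d}(\{n\in M:x_n\notin U\})=0$ and hence statistical convergence.
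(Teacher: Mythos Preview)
Your forward implication is essentially the paper's argument: both fix a decreasing neighbourhood base, form the ``good'' index sets (the paper's $K_n=\{m\in M:x_m\in U_n\}$ is the complement in $M$ of your $M_n$), and glue blocks together along an increasing sequence of thresholds to produce $N$. The paper chooses its thresholds $v_n$ so that $d_{v_n}(K_n)>\overline d(M)-\tfrac1n$, which yields $\overline d(N)\ge\overline d(M)$ directly; your choice of $t_k$ with $d_n(M_k)<\tfrac1k$ for $n\ge t_k$ gives the slightly sharper conclusion $d(M\setminus N)=0$, from which $\overline d(N)=\overline d(M)$ follows. Either way the mechanism is the same block construction.

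For the converse you have put your finger on a genuine problem. The paper disposes of this direction in one line: ``Converse follows from the fact that $d(M\setminus N)=0$ if $\overline d(M)=\overline d(N)$.'' But this ``fact'' is false. Take $M=\mathbb N$ and let $N\subseteq\mathbb N$ be any set with $\overline d(N)=1$ and $\underline d(N)<1$ (for instance the very set $A$ constructed at the start of the paper, with $\underline d(A)=0$ and $\overline d(A)=1$). Then $\overline d(M)=\overline d(N)=1$, yet $\overline d(M\setminus N)=1-\underline d(N)>0$. Worse, this defeats the converse of the theorem as stated: with $X=\{0,1\}$ discrete and $x_n=0$ for $n\in N$, $x_n=1$ otherwise, the subsequence $(x_n)_{n\in N}$ is constant at $0$ and $\overline d(N)=\overline d(\mathbb N)$, but $(x_n)_{n\in\mathbb N}$ is not statistically convergent to $0$ since $\{n:x_n\ne 0\}=\mathbb N\setminus N$ has positive upper density. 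So your instinct is correct: the equivalence goes through cleanly only under the stronger hypothesis $d(M\setminus N)=0$, which your forward construction already delivers and which is what the classical \v{S}al\'at--Fridy characterization uses when $M=\mathbb N$.
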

	\begin{proof}
		Suppose $(x_n)_{n\in M}$ statistically converges to $x\in X$. Let $(U_n)_{n\in \mathbb{N}}$ be a sequence of open sets in $X$ such that $U_{n+1}\subset U_n$ and $x\in U_n$ for all $n\in \mathbb{N}$. Put $K_n =\{m\in M:x_m\in U_n\}$, $n\in \mathbb{N}$. Then $\overline{d}(M)=\overline{d}(K_n)$.\\Let us choose an arbitrary number $v_1\in K_1$ such that $d_{v_1}(K_1)>0$. Suppose $K_2=\{n_1<n_2<n_3<...\}$. Since $\overline{d}(K_2) = \displaystyle{\limsup_{r\rightarrow\infty}}$ $d_{n_r}(K_2)$,
		\begin{center}
			$|d_{n_r}(K_2)-\overline{d}(K_2)|<\frac{1}{2}$ for frequently many $r$
		\end{center}i.e.,
		\begin{center}
			$d_{n_r}(K_2)>\overline{d}(M)-\frac{1}{2}$ for frequently many $r$.
		\end{center}
		So there exists a $v_2\in K_2$ such that $v_2>v_1$ and $d_{v_2}(K_2)>\overline{d}(K_2)-\frac{1}{2}$. Thus one can construct by induction such a sequence $(v_n)_{n\in \mathbb{N}}$ of natural numbers such that $v_n\in K_n$ with $v_{n+1}>v_n$ and $d_{v_n}(K_n)>\overline{d}(M)-\frac{1}{n}$.\\Define $N=\displaystyle{\bigcup_{i=1}^{\infty}}\{v_{i-1},...,v_i-1\}\cap K_{i-1}$ where $v_0=1$ and $K_0=M$. $d_{v_n}(N)\geqslant d_{v_n}(K_n)>\overline{d}(M)-\frac{1}{n}$ for all n and this implies that $\overline{d}(M)\leqslant \overline{d}(N)$ i.e., $\overline{d}(N)=\overline{d}(M)$. Since $x_m\in U_n$ for all $m\in \displaystyle{\bigcup_{i=n+1}^{\infty}}\{v_{i-1},...,v_i-1\}\cap K_{i-1}$, $(x_n)_{n\in N}$ converges to $x$.
		
		Converse follows from the fact that $d(M\backslash N)=0$ if $\overline{d}(M)=\overline{d}(N)$.
	\end{proof}
	\begin{example}
		Let $J$= collection of all nonthin sunsets of $\mathbb{N}$. Then $J$ is uncountable. For $j\in J$, let $\mathcal{A}_j\subset j$ such that $\mathcal{A}_j$ is infinite and $d(\mathcal{A}_j)=0$.\\
		Consider the product space $X=\{0,1\}^J$ where $\{0,1\}$ is discrete space. Define a sequence $(x_n)_{n\in \mathbb{N}}$ in $X$ by 
		\begin{center}
			$\pi_j(x_n)=\begin{cases}1,& \text{if }n\in \mathcal{A}_j\\ 0, & \text{otherwise } \end{cases} $
		\end{center} 
		As $\{n\in \mathbb{N}: x_n\notin \pi_{j}^{-1}(\{0\})\}=\mathcal{A}_j$, $(x_n)_{n\in \mathbb{N}}$ statistically converges to $0\in X$. But for no nonthin subsequence $(x_n)_{n\in M}$ of $(x_n)_{n\in \mathbb{N}}$, $(\pi_M(x_n))_{n\in M}$ converges to 0. So no nonthin subsequence of $(x_n)_{n\in \mathbb{N}}$ converges to $0\in X$. This example shows that first countable space is necessary for Theorem \ref{th1}.
	\end{example}
	\begin{definition}
		Let $(X,\tau)$ be a topological space and let $F\subset X$. Define statistical closure of $F$ as the set $\{x\in X:$ there exists a nonthin sequence $(x_n)_{n\in M}$ in $F$ which is statistically convergent to $x\}$ and denote the set by $\overline{F}^{ST}$. Let's call $F$ is statistically closed if $\overline{F}^{ST}=F$.
	\end{definition}
	\begin{note}
		$F$ is statistically closed if $F$ is closed for $F\subset \overline{F}^{ST}\subset \overline{F}$. But there is no difference between closed and statistically closed subsets of $X$ if $X$ is first countable follows from Theorem \ref{th1}. 
	\end{note}
	\begin{note}
		$\tau_{ST}=\{F\subset X:X\backslash F$ is statistically closed$\}$ forms a topology on $X$ with $\tau\subset \tau_{ST}$.
	\end{note}
	\begin{definition}
		Let $(X,\tau)$ be a topological space. $X$ is called statistical sequential space if $\tau = \tau_{ST}$.
	\end{definition}
	\begin{definition}
		Let $X$ and $Y$ be two topological spaces and let $f:X\to Y$ be a function. $f$ is called statistically continuous function if for any nonthin sequence $(x_n)_{n\in K}$ in $X$ such that $(x_n)_{n\in K}$ statistically converges to $x$, $(f(x_n))_{n\in K}$ statistically converges to $f(x)$.
	\end{definition}
	\begin{note}
		Any continuous function is also statistically continuous.
	\end{note}
	\begin{theorem}
		Let $X$ and $Y$ be two topological spaces and let $f:X\to Y$ be a function. $f$ is statistically continuous if and only if $f^{-1}(B)$ is statistically closed for any statistically closed subset $B$ of $Y$.
	\end{theorem}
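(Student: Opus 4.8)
The plan is to mirror the classical characterization of continuity through closed sets, replacing ordinary closure by the statistical closure operator $\overline{(\cdot)}^{ST}$ and exploiting two facts already recorded: every closed set is statistically closed, and every point $x$ lies in $\overline{F}^{ST}$ whenever $x\in F$ (witnessed by the constant sequence indexed by $\mathbb{N}$, which is nonthin since $d(\mathbb{N})=1$). Thus for any set $F$ one always has $F\subseteq \overline{F}^{ST}$, so proving statistical closedness of a preimage reduces to the single inclusion $\overline{F}^{ST}\subseteq F$.

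For the forward implication, assume $f$ is statistically continuous and let $B\subseteq Y$ be statistically closed. First I would take an arbitrary $x\in \overline{f^{-1}(B)}^{ST}$, so that there is a nonthin sequence $(x_n)_{n\in M}$ in $f^{-1}(B)$ statistically converging to $x$. Applying statistical continuity gives that $(f(x_n))_{n\in M}$ statistically converges to $f(x)$; since $f(x_n)\in B$ for every $n\in M$ and the index set $M$ is unchanged (hence still nonthin), this exhibits $f(x)\in \overline{B}^{ST}=B$, i.e. $x\in f^{-1}(B)$. Combined with $f^{-1}(B)\subseteq \overline{f^{-1}(B)}^{ST}$, this yields statistical closedness of $f^{-1}(B)$.

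For the converse, assume preimages of statistically closed sets are statistically closed, take a nonthin sequence $(x_n)_{n\in K}$ statistically converging to $x$, and aim to show $(f(x_n))_{n\in K}$ statistically converges to $f(x)$. I would argue by contradiction: if this fails, there is an open $V\ni f(x)$ with $K'=\{n\in K:f(x_n)\notin V\}$ not of density zero, hence nonthin. The subsequence $(x_n)_{n\in K'}$ then lies entirely in $f^{-1}(Y\setminus V)$, is nonthin, and still statistically converges to $x$, because for each open $U\ni x$ the bad set $\{n\in K':x_n\notin U\}$ is contained in the density-zero set $\{n\in K:x_n\notin U\}$ and so is itself density zero. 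Since $Y\setminus V$ is closed, it is statistically closed, whence $f^{-1}(Y\setminus V)$ is statistically closed by hypothesis; therefore $x\in \overline{f^{-1}(Y\setminus V)}^{ST}=f^{-1}(Y\setminus V)$, forcing $f(x)\notin V$ and contradicting $f(x)\in V$.

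The only delicate point, and the step I expect to require the most care, is the density bookkeeping in the converse: one must check that the negation of statistical convergence at $f(x)$ genuinely produces a nonthin index set $K'$ (here ``not density zero'' covers both a positive density and a nonexistent density, but in either case the set is nonthin by definition), and that restricting from $K$ to $K'$ preserves statistical convergence to $x$ via the monotonicity $\overline{d}(S)\leqslant \overline{d}(T)$ for $S\subseteq T$. Everything else is formal; in particular the argument never needs $\overline{(\cdot)}^{ST}$ to be idempotent or to be a genuine Kuratowski closure operator, so no such property has to be established.
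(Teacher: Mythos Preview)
Your proof is correct and follows essentially the same route as the paper's: both directions use the statistical closure characterization, with the forward direction pushing a witnessing sequence through $f$, and the converse arguing by contradiction via the nonthin index set $K'=\{n\in K:f(x_n)\notin V\}$, the fact that closed sets are statistically closed, and the observation that $(x_n)_{n\in K'}$ still statistically converges to $x$. Your exposition is in fact a bit more careful than the paper's in spelling out the density bookkeeping (why $K'$ is nonthin and why restriction to $K'$ preserves statistical convergence to $x$), but the underlying argument is identical.
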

	\begin{proof}
		Let $f$ be statistically continuous and let $B$ be a statistically closed subset of $Y$. Suppose $x\in \overline{f^{-1}(B)}^{ST}$. There exists a nonthin sequence $(x_n)_{n\in K}$ in $X$ such that $(x_n)_{n\in K}$ statistically converges to $x\in X$. So $(f(x_n))_{n\in K}$ statistically converges to $f(x)$. Therefore $x\in f^{-1}(B)$ because $B$ is statistically closed.\\
		Conversely let $f^{-1}(B)$ is statistically closed for any statistically closed subset $B$ of $Y$. Suppose $(x_n)_{n\in K}$ is a nonthin sequence in $X$ such that $(x_n)_{n\in K}$ statistically converges to $x\in X$ and $U$ is an open set in $Y$ with $f(x)\in U$. If possible let $\overline{d}(K')>0$ where $K'=\{n\in K:f(x_n)\notin U\}$. Since $Y\backslash U$ is closed in $Y$, $Y\backslash U$ is statistically closed in $Y$ and so $f^{-1}(Y\backslash U)$ is statistically closed in $X$. Since $(x_n)_{n\in K'}$ statistically converges to $x$ , $x\in \overline{(f^{-1}(Y\backslash U))}^{ST}=f^{-1}(Y\backslash U)$. Hence $f(x)\notin U$, which is a contradiction.
	\end{proof}
	\begin{theorem}
		Let $X=\displaystyle{\prod_{\lambda \in \Lambda}}X_{\lambda}$ be the product space of topological spaces $X_{\lambda}$ and let $(x_n)_{n\in M}$ be nonthin sequence in $X$ and $x\in X$. Then $(x_n)_{n\in M}$ statistically converges to $x$ if and only if $(\pi_{\lambda}(x_n))_{n\in M}$ statistically converges to $\pi_{\lambda}(x)$ for all $\lambda \in \Lambda$.
	\end{theorem}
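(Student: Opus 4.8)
The plan is to use the continuity of the projection maps for the forward implication, and the finitary character of basic open sets in the product topology together with the finite subadditivity of upper density for the reverse implication.

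For the forward direction, I would invoke the fact established earlier that every continuous map is statistically continuous. Since each projection $\pi_{\lambda}:X\to X_{\lambda}$ is continuous, it is statistically continuous, so statistical convergence of $(x_n)_{n\in M}$ to $x$ immediately yields statistical convergence of $(\pi_{\lambda}(x_n))_{n\in M}$ to $\pi_{\lambda}(x)$ for every $\lambda$. Concretely, given an open $U_{\lambda}\subset X_{\lambda}$ with $\pi_{\lambda}(x)\in U_{\lambda}$, the preimage $\pi_{\lambda}^{-1}(U_{\lambda})$ is open in $X$ and contains $x$; the set identity $\{n\in M:\pi_{\lambda}(x_n)\notin U_{\lambda}\}=\{n\in M:x_n\notin \pi_{\lambda}^{-1}(U_{\lambda})\}$ then forces the left-hand set to have density zero.

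For the reverse direction, suppose $(\pi_{\lambda}(x_n))_{n\in M}$ statistically converges to $\pi_{\lambda}(x)$ for every $\lambda$, and let $U$ be an open subset of $X$ containing $x$. By the definition of the product topology I would select a basic open set with $x\in \bigcap_{i=1}^{k}\pi_{\lambda_i}^{-1}(U_{\lambda_i})\subset U$, where each $U_{\lambda_i}$ is open in $X_{\lambda_i}$ and contains $\pi_{\lambda_i}(x)$. The crucial containment is
\[
\{n\in M:x_n\notin U\}\subset \bigcup_{i=1}^{k}\{n\in M:\pi_{\lambda_i}(x_n)\notin U_{\lambda_i}\},
\]
obtained by contraposition: if $x_n\notin U$ then $x_n$ escapes the basic open set, hence some coordinate $\pi_{\lambda_i}(x_n)$ lies outside $U_{\lambda_i}$. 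Each member of the right-hand union has density zero by hypothesis, and from $d_n(A\cup B)\leqslant d_n(A)+d_n(B)$ one gets $\overline{d}(A\cup B)\leqslant \overline{d}(A)+\overline{d}(B)$, so the finite union has upper density zero; being a subset of a density-zero set, $\{n\in M:x_n\notin U\}$ therefore has density zero. This shows $(x_n)_{n\in M}$ statistically converges to $x$.

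The only genuine obstacle is to ensure the argument really exploits the product topology rather than the box topology: it is precisely because a basic open set constrains only \emph{finitely} many coordinates that the passage to a finite union is available, and it is only for finite unions that the subadditivity of upper density applies. I expect the subadditivity estimate and the reduction from an arbitrary open set to a basic one (via monotonicity of density under inclusion of the complement sets) to be entirely routine.
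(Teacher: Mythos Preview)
Your argument is correct in both directions. The forward implication is exactly the paper's: continuity of the projections plus the earlier Note that continuous maps are statistically continuous. For the converse, the paper in fact gives no argument at all---its proof consists solely of the sentence handling the forward direction---so your treatment via a basic open set $\bigcap_{i=1}^{k}\pi_{\lambda_i}^{-1}(U_{\lambda_i})$ and the finite subadditivity $\overline{d}(A\cup B)\leqslant \overline{d}(A)+\overline{d}(B)$ supplies what the paper omits. Your remark that the finiteness of the basic open set is essential (so the argument is tied to the product topology, not the box topology) is a worthwhile observation that the paper does not make; the same device reappears later in the paper's proof that finite products of statistically compact spaces are statistically compact.
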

	\begin{proof}
		Since projection map $\pi_{\lambda}:X\to X_{\lambda}$ is continuous, $(\pi_{\lambda}(x_n))_{n\in M}$ statistically converges to $\pi_{\lambda}(x)$ if $(x_n)_{n\in M}$ statistically converges to $x$. 
	\end{proof}
	\begin{corollary}
		Let $X$ be a topological space. Nonthin sequence in $X$ can statistically convergent to atmost one point of $X$ if and only if $\Delta X=\{(x,x): x\in X\}$ is statistically closed in $X\times X$.
	\end{corollary}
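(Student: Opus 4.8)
The plan is to mimic the classical equivalence ``$X$ is Hausdorff $\iff$ $\Delta X$ is closed in $X\times X$'', replacing closedness by statistical closedness, convergence by statistical convergence of nonthin sequences, and using the product theorem proved above as the bridge between a sequence in $X$ and its diagonal image in $X\times X$. I would first record two trivial bookkeeping facts to be used repeatedly: a sequence in $\Delta X$ must have the form $((x_n,x_n))_{n\in M}$; and the diagonal sequence $((x_n,x_n))_{n\in M}$ shares the index set $M$ with $(x_n)_{n\in M}$, so one is nonthin precisely when the other is. Here $X\times X$ is read as the two-factor product $\prod_{\lambda\in\{1,2\}}X_\lambda$ with $X_1=X_2=X$, so the product theorem applies.

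For the forward implication I would assume that every nonthin sequence in $X$ has at most one statistical limit and prove $\overline{\Delta X}^{ST}\subset \Delta X$, the reverse inclusion being automatic. Given $(a,b)\in \overline{\Delta X}^{ST}$, I take a nonthin sequence $((x_n,x_n))_{n\in M}$ in $\Delta X$ statistically converging to $(a,b)$ and apply the easy half of the product theorem to the two projections $\pi_1,\pi_2:X\times X\to X$: this yields that the \emph{single} nonthin sequence $(x_n)_{n\in M}$ statistically converges both to $a$ and to $b$. The uniqueness hypothesis then forces $a=b$, so $(a,b)=(a,a)\in\Delta X$.

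For the converse I would assume $\Delta X$ is statistically closed and suppose a nonthin sequence $(x_n)_{n\in M}$ statistically converges to both $a$ and $b$. Passing to the diagonal gives a nonthin sequence $((x_n,x_n))_{n\in M}$ in $\Delta X$ whose two coordinate sequences both equal $(x_n)_{n\in M}$ and statistically converge to $a$ and $b$ respectively; so the product theorem delivers statistical convergence of $((x_n,x_n))_{n\in M}$ to $(a,b)$. Hence $(a,b)\in\overline{\Delta X}^{ST}=\Delta X$, whence $a=b$, and no nonthin sequence can have two distinct statistical limits.

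The one step carrying real content is the ``if'' half of the product theorem — componentwise statistical convergence of a nonthin sequence forcing joint statistical convergence — on which the converse direction rests; the forward direction needs only the easy ``only if'' half coming from continuity of the projections. I expect this to be the main obstacle only insofar as that half of the product theorem must be available; once it is, both directions are short, the remaining work being the bookkeeping noted above, namely that a point of $\overline{\Delta X}^{ST}$ lies in $\Delta X$ exactly when its two coordinates coincide.
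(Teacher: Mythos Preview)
Your proposal is correct and is exactly the argument the paper has in mind: the corollary is stated immediately after the product theorem with no separate proof, and your two directions simply unpack how that theorem, applied to the diagonal sequence $((x_n,x_n))_{n\in M}$, yields the equivalence. Your identification of which half of the product theorem each implication needs is accurate, and the bookkeeping about nonthin index sets and the form of sequences in $\Delta X$ is the only remaining content.
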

\begin{definition}
	A function $f:X\to Y$ is statistically closed if $f(B)$ is statistically closed for any statistically closed subset $B$ of $X$.
\end{definition}
	\begin{definition}
		A statistically continuous function $f:X\to Y$ is statistically proper if $f\times 1_Z:X\times Z\to Y\times Z$ is statistically closed for all spaces $Z$.
	\end{definition}
	\begin{definition}
		A bijective mapping $f:X\to Y$ is called statistical homeomorphism if $f$ and $f^{-1}$ are both statistical continuous.
	\end{definition}
	\begin{pro}
		The following are equivalent for an one-one statistical continuous function $f:X\to Y$:
		\begin{itemize}
			\item f is statistically proper,
			\item f is statistically closed,
			\item f is a statistical homeomorphism.
		\end{itemize}
	\end{pro}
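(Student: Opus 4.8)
The plan is to mirror the classical equivalence ``proper $=$ closed $=$ closed embedding'' for injective continuous maps, running the three conditions around a cycle (1)$\Rightarrow$(2)$\Rightarrow$(3)$\Rightarrow$(1). The implication (1)$\Rightarrow$(2) should be immediate: taking $Z$ to be a one-point space, the canonical identifications $X\times Z\cong X$ and $Y\times Z\cong Y$ turn $f\times 1_Z$ into $f$ itself, so statistical properness forces $f$ to be statistically closed.

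For (2)$\Rightarrow$(3) I would first record that, since $X$ is trivially statistically closed in itself, statistical closedness of $f$ applied to $B=X$ shows that $f(X)$ is statistically closed in $Y$; thus $f$ corestricts to a map onto the statistically closed subspace $f(X)$. To see this corestriction is a statistical homeomorphism it remains to check that $f^{-1}\colon f(X)\to X$ is statistically continuous, and here I would invoke the earlier characterisation of statistical continuity by preimages of statistically closed sets: for statistically closed $B\subseteq X$ one has $(f^{-1})^{-1}(B)=f(B)$, which is statistically closed in $Y$ by hypothesis and hence in the subspace $f(X)$. So $f$ is a statistical homeomorphism onto its image, \emph{and} that image is statistically closed. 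I read condition (3) with this last clause included (the statistical analogue of a \emph{closed} embedding); this is exactly what the equivalence with (2) forces and is the reading under which the proposition is correct.

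The substantial direction is (3)$\Rightarrow$(1), where I expect the real work to lie. Writing $A=f(X)$, I would factor $f\times 1_Z$ as the corestriction $X\times Z\to A\times Z$ followed by the inclusion $A\times Z\hookrightarrow Y\times Z$. Three auxiliary facts then assemble the conclusion. First, since $A$ is statistically closed in $Y$, the set $A\times Z=\pi_Y^{-1}(A)$ is statistically closed in $Y\times Z$, because the projection $\pi_Y$ is continuous, hence statistically continuous, and preimages of statistically closed sets under statistically continuous maps are statistically closed. Second, the corestriction $X\times Z\to A\times Z$ is again a statistical homeomorphism: for the two-factor product, statistical convergence is coordinatewise (the ``only if'' is the product theorem, and the ``if'' is elementary, since $\{n:(x_n,z_n)\notin U\times V\}=\{n:x_n\notin U\}\cup\{n:z_n\notin V\}$ is a finite union of density-zero sets), so both $f\times 1_Z$ and $f^{-1}\times 1_Z$ are statistically continuous, and a statistical homeomorphism carries statistically closed sets to statistically closed sets. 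Third, a statistically closed subset of a statistically closed subspace is statistically closed in the ambient space: a nonthin sequence statistically converging in $Y\times Z$ must have its limit inside the statistically closed $A\times Z$, and for a sequence lying in $A\times Z$ statistical convergence in the subspace coincides with statistical convergence in $Y\times Z$. Chaining these, any statistically closed $B\subseteq X\times Z$ has $(f\times 1_Z)(B)$ statistically closed in $A\times Z$, hence in $Y\times Z$, which is precisely statistical properness.

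The main obstacle is thus the product bookkeeping in (3)$\Rightarrow$(1), but I expect it to dissolve once the three lemmas above are isolated: only the two-factor product $-\times Z$ is ever used, so no delicate behaviour of statistical convergence in infinite products is needed, and the subspace transitivity of statistical closedness holds with no countability hypothesis, so Theorem~\ref{th1} is not secretly invoked. Each lemma reduces to routine manipulation of asymptotic densities of the relevant index sets, and with them in hand the cycle closes.
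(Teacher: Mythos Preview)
The paper states this proposition without proof, so there is nothing to compare your argument against directly. Your cycle (1)$\Rightarrow$(2)$\Rightarrow$(3)$\Rightarrow$(1) is the natural one and each step is sound: the one-point choice of $Z$ for (1)$\Rightarrow$(2), the preimage characterisation of statistical continuity for (2)$\Rightarrow$(3), and the factorisation through $f(X)\times Z$ together with the three auxiliary lemmas for (3)$\Rightarrow$(1) all go through as you describe. The coordinatewise characterisation of statistical convergence in a two-fold product and the transitivity of statistical closedness across a statistically closed subspace are indeed routine density computations, and neither requires first countability.

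The one point worth flagging explicitly is the interpretive issue you already raise. The paper's Definition of \emph{statistical homeomorphism} requires bijectivity, yet the hypothesis on $f$ is only injectivity. Under the literal reading the proposition is false: the inclusion of a proper statistically closed subspace into its ambient space is injective, statistically continuous and statistically closed, but not surjective and hence not a statistical homeomorphism in the paper's sense. Your reading of (3) as ``statistical homeomorphism onto $f(X)$ with $f(X)$ statistically closed in $Y$'' --- the statistical analogue of a closed embedding --- is precisely what the equivalence with (2) forces, and is the only reading under which the three conditions are genuinely equivalent for merely injective $f$. You are right to make this explicit; the paper leaves it implicit.
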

	\begin{definition}
		A topological space X is called statistically compact if every non-thin sequence in $X$ has nonthin statistically convergent subsequence.
	\end{definition}
	\begin{theorem}
		Statistcally compact first countable space is sequentially compact.
	\end{theorem}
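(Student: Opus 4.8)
The plan is to take an arbitrary sequence in $X$ and manufacture an ordinary convergent subsequence by routing through statistical convergence and then invoking Theorem \ref{th1}. First I would pick any sequence $(x_n)_{n\in\mathbb{N}}$ in $X$ and note that, since $d(\mathbb{N})=1$, this sequence is nonthin. Statistical compactness then hands me a nonthin subsequence $(x_n)_{n\in M}$ that statistically converges to some point $x\in X$; here $M$ being nonthin means $\overline{d}(M)>0$.

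The key step is to convert this statistical convergence into genuine convergence. Because $X$ is first countable and $(x_n)_{n\in M}$ is a nonthin sequence statistically convergent to $x$, Theorem \ref{th1} yields a subset $N\subseteq M$ with $\overline{d}(N)=\overline{d}(M)$ such that $(x_n)_{n\in N}$ converges to $x$ in the ordinary sense.

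Finally I would observe that $\overline{d}(N)=\overline{d}(M)>0$ forces $N$ to be infinite, since a finite set has density zero, and hence cofinal in $\mathbb{N}$. Thus $(x_n)_{n\in N}$ is a bona fide subsequence of $(x_n)_{n\in\mathbb{N}}$ converging to $x$. As the original sequence was arbitrary, every sequence in $X$ admits a convergent subsequence, so $X$ is sequentially compact.

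The entire content of the argument is concentrated in the application of Theorem \ref{th1}: this is exactly where first countability enters and is what upgrades statistical convergence to ordinary convergence along an index set of the same upper density. I expect this to be the only real obstacle; the surrounding bookkeeping, namely that $\mathbb{N}$ and $M$ are nonthin and that positive upper density forces infinitude, is routine.
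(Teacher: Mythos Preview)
Your proposal is correct and follows essentially the same route as the paper: apply statistical compactness to the given sequence to obtain a nonthin statistically convergent subsequence, then invoke Theorem~\ref{th1} (using first countability) to extract an ordinarily convergent subsequence. Your version is in fact slightly more careful than the paper's, since you explicitly justify why the index set $N$ produced by Theorem~\ref{th1} is infinite (via $\overline{d}(N)>0$), a point the paper leaves implicit.
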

	\begin{proof}
		Let $X$ be a first countable space which is statistically compact and let $(x_n)_{n\in M}$ be a nonthin sequence in $X$.
		Then $(x_n)_{n\in \mathbb{N}}$ has a nonthin statistically convergent subsequence $(x_n)_{n\in N}$ that statistically converges to $a\in X$ . From Theorem \ref{th1} it follows that $(x_n)_{n\in N}$ has a convergent subsequence $(x_n)_{n\in K}$ such that $(x_n)_{n\in K}$converges to $a$. Hence $X$ is sequentially compact.
	\end{proof}
	\begin{corollary}
		Statistically compact metric space is compact.
	\end{corollary}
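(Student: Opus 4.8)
The plan is to reduce this corollary to the theorem that immediately precedes it, combined with a classical fact from metric topology. Since $X$ is a metric space, the first step is to observe that $X$ is first countable: at each point $x$ the countable family of open balls $\{B(x,\tfrac{1}{n}):n\in\mathbb{N}\}$ is a neighbourhood base. This places $X$ within the hypotheses of the preceding theorem.

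Next, I would apply that theorem directly. Because $X$ is statistically compact and first countable, it follows at once that $X$ is sequentially compact; that is, every sequence in $X$ has a convergent subsequence. No further work specific to statistical convergence is needed beyond this invocation, since the theorem has already done the translation from statistical compactness to ordinary sequential compactness.

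Finally, I would invoke the standard result that sequential compactness and compactness coincide for metric spaces, concluding that $X$ is compact, as required. The argument presents essentially no obstacle: each step is either a definitional observation (metric $\Rightarrow$ first countable) or an appeal to an already-available statement (the preceding theorem, and the classical metric-space equivalence). The only point meriting a word of care is that the passage from sequential compactness to compactness is genuinely a metric phenomenon, so one should cite or state this classical equivalence rather than attempt to re-derive it here.
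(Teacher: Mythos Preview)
Your proposal is correct and matches the paper's intended argument: the corollary is stated without proof precisely because it follows immediately from the preceding theorem (statistically compact first countable spaces are sequentially compact) together with the classical equivalence of sequential compactness and compactness in metric spaces. Your identification of first countability via the balls $B(x,\tfrac{1}{n})$ and the final appeal to the metric-space equivalence is exactly the expected justification.
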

	\begin{example}
		Consiser the space $[1,\omega_1]$ with order topology where $\omega_1$ is the first uncountable ordinal. Let $(x_n)_{n\in A}$ be a nonthin sequence in $S_\Omega=[1,\omega_1)$. If range of $(x_n)_{n\in A}$ is finite then $(x_n)_{n\in A}$ has nonthin statistically convergent subsequence. Suppose range of $(x_n)_{n\in A}$ is not finite. Let $b\in S_\Omega$ be the least upper bound of $\{x_n:n\in A\}$. Let $[1,b)=\{y_m:m\in \mathbb{N}\}$ where $y_1=1$ and $y_{m+1}$ is the least upper bound of $[1,b)-\{y_1,...y_m\}$.\\ Define $S_m = \{n\in A: x_n = y_m\}$, $ m\in \mathbb{N} $ and $ S=\{n\in A:x_n = b\}$. If $\overline{\rm d}(S_m) \ne 0$ for some $m\in \mathbb{N}$ or $\overline{\rm d}(S) \ne 0$ then $(x_n)_{n\in S_m}$ for some $m\in \mathbb{N}$  or $(x_n)_{n\in S}$ become a nonthin statistically convergent subsequence of $(x_n)_{n\in A}$.\\Suppose $d(S_m) = 0$ for all $m\in \mathbb{N}$ and $d(S) = 0$ . Let  $\alpha \in [1,b)$. Then there exist a $p\in \mathbb{N}$ such that $\alpha=y_p$ and hence $y_n\in (\alpha,b
		]$ for all $n>p$. Therefore $\{n\in A:x_n\notin (\alpha,b]\}\subset$${\overset{p}{ \underset{i=1} \bigcup}  \{n\in A: x_n = y_i\}} ={\overset{p}{ \underset{i=1} \bigcup} S_i} $. Since $d(S_i)=0$ for all $i=1(1)p$, $d(\{n\in A:y_n\notin (\alpha,b]\})$ =0. Hence $(x_n)_{n\in A}$ is statistically converges to $b\in S_\Omega$. Therefore $S_\Omega$ is statistically compact space but not compact.
	\end{example}
	\begin{example}
		Consider $X=\{\frac{1}{n}:n\in \mathbb {N}\}\cup\{0\}$  as a subspace of $\mathbb{R}$ with usual topology. Then $X$ is statistically compact. But $X^{\prime} = \{\frac{1}{n}:n\in \mathbb{N}\}$ is a open subspace of X which is not compact as a subspace of X. Therefore $X^{\prime}$ is not statistically compact. So subspace of a statistically compact space may not be statistically compact.
	\end{example}
	\begin{theorem}
		Statistically closed subspace of statstically compact space is statistically compact.
	\end{theorem}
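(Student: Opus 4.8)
The plan is to take an arbitrary nonthin sequence in $F$, regard it as a sequence in the ambient space $X$, extract a nonthin statistically convergent subsequence using the statistical compactness of $X$, and then invoke the statistical closedness of $F$ to force the limit point to lie inside $F$. The only genuinely delicate point is reconciling statistical convergence computed in $X$ with statistical convergence in the subspace topology of $F$, so I would keep track of that carefully.

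First I would let $(x_n)_{n\in M}$ be an arbitrary nonthin sequence in $F$. Since $F\subseteq X$, the same sequence is a nonthin sequence in $X$, so by the statistical compactness of $X$ there is a cofinal nonthin subset $N\subseteq M$ such that $(x_n)_{n\in N}$ statistically converges (in $X$) to some point $x\in X$. Next, because $(x_n)_{n\in N}$ is a nonthin sequence lying entirely in $F$ and statistically converging to $x$, the definition of statistical closure yields $x\in\overline{F}^{ST}$; as $F$ is statistically closed, $\overline{F}^{ST}=F$, and hence $x\in F$.

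The step requiring care, which I expect to be the main obstacle, is showing that the statistical convergence of $(x_n)_{n\in N}$ to $x$ holds in the subspace topology of $F$ and not merely in $X$. I would verify this directly from the definition: a basic open neighbourhood of $x$ in $F$ has the form $U\cap F$ with $U$ open in $X$ and $x\in U$. Since every term $x_n$ lies in $F$, the conditions $x_n\notin U\cap F$ and $x_n\notin U$ are equivalent, so $\{n\in N:x_n\notin U\cap F\}=\{n\in N:x_n\notin U\}$, and the latter has asymptotic density zero by the statistical convergence in $X$. Therefore $(x_n)_{n\in N}$ statistically converges to $x$ inside $F$ as well.

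Putting these together, every nonthin sequence $(x_n)_{n\in M}$ in $F$ admits a nonthin subsequence $(x_n)_{n\in N}$ that statistically converges to a point $x\in F$, which is exactly the condition for $F$ to be statistically compact. Thus the statistically closed subspace $F$ inherits statistical compactness from $X$.
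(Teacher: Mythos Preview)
Your proof is correct and follows essentially the same approach as the paper: extract a nonthin statistically convergent subsequence in $X$, then use statistical closedness to force the limit into the subspace. The only difference is that you explicitly verify statistical convergence transfers to the subspace topology via $\{n\in N:x_n\notin U\cap F\}=\{n\in N:x_n\notin U\}$, a step the paper leaves implicit.
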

	\begin{proof}
		Let $X$ be a statistically compact space and let $Y$ be statistically closed subspace of $X$. Suppose $(x_n)_{n\in A}$  be a non-thin sequence in $Y$. There exists a nonthin subsequence $(x_n)_{n\in A^{\prime}}$ of $(x_n)_{n\in A}$ such that $(x_n)_{n\in A^{\prime}}$ statistically converges to $x\in X$. Therefore $x\in \overline{Y}^{ST} = Y $. Thus $(x_n)_{n\in A^{\prime}}$ is statistically converges to $x\in Y$ and so Y is statistically compact.
	\end{proof}
	\begin{corollary}
		Closed subspace of statstically compact space is statistically compact.
	\end{corollary}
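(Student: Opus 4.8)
The plan is to deduce this immediately from the preceding theorem by observing that a closed set is automatically statistically closed. Thus the whole content reduces to verifying the inclusion $\overline{F}^{ST}\subset \overline{F}$ for an arbitrary subset $F$ of a topological space, a fact already announced in the earlier \textbf{Note}.

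First I would fix a point $x\in \overline{F}^{ST}$, so that there is a nonthin sequence $(x_n)_{n\in M}$ lying in $F$ and statistically converging to $x$. If $x$ failed to lie in the ordinary closure $\overline{F}$, there would be an open set $U\ni x$ with $U\cap F=\emptyset$; since every term $x_n$ belongs to $F$, this forces $x_n\notin U$ for all $n\in M$, whence $\{n\in M:x_n\notin U\}=M$. Statistical convergence would then demand $d(M)=0$, contradicting the hypothesis that $(x_n)_{n\in M}$ is nonthin. Hence $x\in \overline{F}$, and the inclusion holds.

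Now for a closed subspace $Y$ of a statistically compact space $X$ one has $\overline{Y}=Y$, so combining $Y\subset \overline{Y}^{ST}\subset \overline{Y}=Y$ gives $\overline{Y}^{ST}=Y$; that is, $Y$ is statistically closed. Applying the theorem that a statistically closed subspace of a statistically compact space is statistically compact then finishes the argument. There is no genuine obstacle here: the only mildly delicate point is the use of the nonthin hypothesis in the paragraph above, which is precisely what prevents a statistically convergent nonthin sequence from escaping the ordinary closure.
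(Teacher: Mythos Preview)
Your proof is correct and follows exactly the route implicit in the paper: the corollary is obtained from the preceding theorem together with the earlier \textbf{Note} that closed implies statistically closed (via $F\subset \overline{F}^{ST}\subset \overline{F}$). You have simply spelled out the verification of $\overline{F}^{ST}\subset \overline{F}$ that the paper left as evident.
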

	\begin{theorem}\label{th4}
		Suppose $X$ is a topological space such that $\Delta X$ is statistically closed. Then statistically compact subspace of $X$ is statistically closed.
	\end{theorem}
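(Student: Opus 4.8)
The plan is to prove the statement by establishing the nontrivial inclusion $\overline{A}^{ST}\subseteq A$ for a statistically compact subspace $A$ of $X$; the reverse inclusion $A\subseteq\overline{A}^{ST}$ always holds, since constant sequences are nonthin. So let $x\in\overline{A}^{ST}$. By definition there is a nonthin sequence $(x_n)_{n\in M}$ lying in $A$ that statistically converges to $x$ in $X$. Since $A$ is statistically compact, this sequence admits a nonthin subsequence $(x_n)_{n\in N}$, with $N\subseteq M$ cofinal and $\overline{d}(N)>0$, that statistically converges to some $a\in A$ in the subspace $A$. First I would check that this subspace convergence transfers to $X$: if $U$ is open in $X$ with $a\in U$, then $U\cap A$ is a neighbourhood of $a$ in $A$, and since every $x_n\in A$ we have $\{n\in N:x_n\notin U\}=\{n\in N:x_n\notin U\cap A\}$, a set of density $0$; hence $(x_n)_{n\in N}$ statistically converges to $a$ in $X$ as well.

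The decisive step is to show that $(x_n)_{n\in N}$ also statistically converges to $x$ in $X$. Here one must resist the temptation suggested by the opening example: a nonthin subsequence of a statistically convergent sequence can fail to be statistically convergent, but only under the reindexing notion of subsequence. Under the paper's index-preserving convention, passing from $M$ to a nonthin subset $N\subseteq M$ is harmless. Indeed, for any open $U\ni x$ one has the inclusion $\{n\in N:x_n\notin U\}\subseteq\{n\in M:x_n\notin U\}$, and the right-hand set has density $0$ because $(x_n)_{n\in M}$ statistically converges to $x$; monotonicity of upper density over subsets then forces $\overline{d}(\{n\in N:x_n\notin U\})=0$, i.e.\ $(x_n)_{n\in N}$ statistically converges to $x$.

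At this point $(x_n)_{n\in N}$ is a single nonthin sequence in $X$ that statistically converges both to $x$ and to $a$. Invoking the hypothesis that $\Delta X$ is statistically closed, the corollary characterising uniqueness of statistical limits guarantees that a nonthin sequence has at most one statistical limit, so $x=a\in A$. This yields $\overline{A}^{ST}\subseteq A$ and completes the proof. I expect the main obstacle to be precisely the decisive step above: one has to recognise that the pathology highlighted in the introduction is an artefact of reindexing and does not arise for index-preserving subsequences, so that both limits genuinely coexist on the single index set $N$. Once that is in place, the uniqueness supplied by $\Delta X$ finishes the argument with no further effort.
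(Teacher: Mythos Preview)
Your proof is correct and follows essentially the same route as the paper's: take a nonthin sequence in the subspace statistically converging to $x$, extract a nonthin subsequence converging in the subspace to some $a$, and use the uniqueness of statistical limits (equivalent to $\Delta X$ being statistically closed) to conclude $x=a$. You are simply more explicit than the paper about two points it leaves tacit---that subspace convergence transfers to $X$, and that the index-preserving subsequence inherits statistical convergence to $x$---both of which are handled correctly.
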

	\begin{proof}
		Let $Y$ be a statistically compact subspace of $X$. Suppose $x\in \overline{Y}^{ST}$. There exists a nonthin sequence $(x_n)_{n\in K}$ in $Y$ such that $(x_n)_{n\in K}$ is statistically convergent to $x$. For $Y$ is statistically compact, there is a nonthin subsequence $(x_n)_{n\in K'}$ of $(x_n)_{n\in K}$ so that $(x_n)_{n\in K'}$ is statistically convergent to $y$ for some $y\in Y$. Since $\Delta X$ is statistically closed, $x=y$.
	\end{proof}
	\begin{theorem}
		Statistical continuous image of a statistically compact is statistically compact.
	\end{theorem}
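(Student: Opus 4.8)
The plan is to imitate the classical proof that a continuous image of a (sequentially) compact space is compact, replacing ordinary convergence by statistical convergence throughout. Let $f:X\to Y$ be statistically continuous with $X$ statistically compact; I would show that the subspace $f(X)$ is statistically compact. Throughout I rely only on the two governing definitions: statistical compactness (every nonthin sequence admits a nonthin statistically convergent subsequence) and statistical continuity (statistically convergent nonthin sequences are carried to statistically convergent ones).

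First I would take an arbitrary nonthin sequence $(y_n)_{n\in M}$ in $f(X)$ and lift it to $X$: for each $n\in M$ choose, via the axiom of choice, a point $x_n\in X$ with $f(x_n)=y_n$. Because the index set $M$ is left unchanged, $(x_n)_{n\in M}$ is again nonthin, so the statistical compactness of $X$ supplies a nonthin subsequence $(x_n)_{n\in N}$, with $N$ a nonthin cofinal subset of $M$, that statistically converges to some $x\in X$.

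Next I would apply statistical continuity of $f$ to this convergent subsequence, obtaining that $(f(x_n))_{n\in N}=(y_n)_{n\in N}$ statistically converges to $f(x)$ in $Y$. The one remaining point is to pass from statistical convergence in $Y$ to statistical convergence inside the subspace $f(X)$. For any open set $V$ of $f(X)$ containing $f(x)$, write $V=U\cap f(X)$ with $U$ open in $Y$; since every $f(x_n)$ lies in $f(X)$, the bad index set $\{n\in N:f(x_n)\notin V\}$ coincides with $\{n\in N:f(x_n)\notin U\}$, which has density zero. Hence $(y_n)_{n\in N}$ is a nonthin subsequence of $(y_n)_{n\in M}$ statistically converging to the point $f(x)\in f(X)$, and $f(X)$ is statistically compact.

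The argument is essentially routine, so I do not expect a genuine obstacle; the only mild care needed is the bookkeeping at the final step, ensuring that the definition of statistical convergence, which quantifies over open neighborhoods, transfers correctly to the subspace $f(X)$, together with the observation that the lifted sequence retains the same nonthin index set $M$ so that the statistical compactness hypothesis on $X$ applies verbatim.
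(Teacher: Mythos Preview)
Your proof is correct and follows essentially the same route as the paper: lift the nonthin sequence through $f$, extract a nonthin statistically convergent subsequence in $X$ by compactness, and push it forward via statistical continuity. The only cosmetic difference is that the paper assumes $f$ is onto and works directly in $Y$, whereas you treat $f(X)$ as a subspace and add the (trivially correct) verification that convergence transfers to the subspace topology.
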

	\begin{proof}
		Let $X$ and $Y$ be two topological space where $X$ is statistically compact and let $f:X\to Y$ be statistical continuous onto map. \\Suppose $(y_n)_{n\in A}$ is a nonthin sequence in $Y$. There is $x_n\in X$ such that $f(x_n)=y_n$ for all $n\in A$. Also there exists a nonthin subsequence $(x_n)_{n\in A^{\prime}}$ of $(x_n)_{n\in A}$ such that $(x_n)_{n\in A^{\prime}}$ statistically converges to $x\in X$. Since $f$ is statistical continuous, $(y_n)_{n\in A^{\prime}}$ is statistically converges to $f(x)$. Hence $Y$ is statistically compact.
	\end{proof}
	\begin{corollary}\label{th2}
		Continuous image of a statistically compact is statistically compact.
	\end{corollary}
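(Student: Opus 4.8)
The plan is to deduce this corollary directly from the theorem immediately above it --- that the statistically continuous image of a statistically compact space is statistically compact --- by supplying the single missing link, namely that continuity implies statistical continuity. That implication is already recorded in the earlier Note stating that any continuous function is statistically continuous, so the corollary should fall out with essentially no new work.

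Concretely, I would take a continuous onto map $f:X\to Y$ with $X$ statistically compact. By that Note, $f$ is statistically continuous, and then the preceding theorem applies unchanged to conclude that $Y$ is statistically compact. If instead one wishes to allow $f$ to be an arbitrary continuous map that is not assumed surjective, I would corestrict to the image and work with $f:X\to f(X)$, which is still continuous; the same argument then shows that the subspace $f(X)\subseteq Y$ is statistically compact.

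The only point demanding attention --- more a matter of bookkeeping than of genuine difficulty --- is to confirm that the Note's implication is legitimately quotable here and that all hypotheses of the preceding theorem (statistical continuity together with surjectivity onto the relevant target) are met. Once this is checked there is no substantive obstacle, and the argument is complete.
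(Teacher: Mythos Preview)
Your proposal is correct and matches the paper's intent: the corollary is stated without its own proof precisely because it follows immediately from the preceding theorem together with the earlier Note that every continuous function is statistically continuous. Your optional corestriction remark is a harmless refinement of the surjectivity bookkeeping and is consistent with the paper's approach.
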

	\begin{example}\label{ex1}
		Define a sequence $(x_n)_{n\in \mathbb{N}}$  in $\mathbb{R}$ by $x_n=\frac{r}{m}$ if $n=\frac{m(m+1)}{2}+r ; r\in \{0,1,2,...,m\}$ and $m\in \mathbb{N}$. Let $A=\{n_1<n_2<...<n_k<...\}$ be a nonthin subset of $\mathbb{N}$. If possible let $(x_n)_{n\in A}$ is statistically convergent to $l$ for some $l\in [0,1]$.\\
		Let $l\in (0,1)$. Suppose $0<\epsilon<min\{l,1-l\}$. Since $\{n_k\leq n:x_{n_k}\in (l-\epsilon,l+\epsilon)\}\subset\{m\leqslant n:x_m\in [l-\epsilon,l+\epsilon]\}$ and $(x_n)_{n\in \mathbb{N}}$ is uniformly distributed sequence in $[0,1]$, $\overline{d}(\{n_k:x_{n_k}\in (l-\epsilon,l+\epsilon)\})\leqslant2\epsilon$. Therefore $\overline{d}(A)\leqslant2\epsilon$ for any $\epsilon$ satisfying $0<\epsilon<min\{l,1-l\}$ and this implies $\overline {d}(A)=0$, which is a contradiction. Similarly one can see a contradiction if $l\in \{0,1\}$. Thus $[0,1]$ is not statistically compact.
	\end{example}
	\begin{note}
		From Example \ref{ex1} and Corollary \ref{th2} it follows that any closed interval $[a,b]$, $a,b\in \mathbb{R}$ with $a<b$ is not statistically compact as a subspace of $\mathbb{R}$ and consequently no interval in $\mathbb{R}$ is statistically compact.
	\end{note}
	\begin{note}
		It follows from Example \ref{ex1} that sequentially compact space may not be statistically compact in first countable space and also compact space may not be statistically compact in metric space.
	\end{note}
	\begin{theorem}
		Finite product of statistically compact spaces is statistically compact.
	\end{theorem}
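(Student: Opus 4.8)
The plan is to reduce to the case of two factors and then iterate, since $\prod_{i=1}^{k}X_i$ is statistically homeomorphic to $\bigl(\prod_{i=1}^{k-1}X_i\bigr)\times X_k$; so it suffices to show that if $X$ and $Y$ are statistically compact then so is $X\times Y$. Let $(z_n)_{n\in M}$ be a nonthin sequence in $X\times Y$ and write $z_n=(x_n,y_n)$, so that $(x_n)_{n\in M}$ and $(y_n)_{n\in M}$ are nonthin sequences in $X$ and $Y$ respectively.

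First I would invoke statistical compactness of $X$: the nonthin sequence $(x_n)_{n\in M}$ admits a nonthin subsequence $(x_n)_{n\in M_1}$, with $M_1\subset M$ and $\overline{d}(M_1)>0$, statistically converging to some $x\in X$. Since $\overline{d}(M_1)>0$, the sequence $(y_n)_{n\in M_1}$ is again nonthin, so statistical compactness of $Y$ yields a further nonthin subsequence $(y_n)_{n\in M_2}$, with $M_2\subset M_1$ and $\overline{d}(M_2)>0$, statistically converging to some $y\in Y$.

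The key step, and the one I expect to be the real heart of the argument, is to verify that passing from $M_1$ down to the smaller index set $M_2$ does not destroy the statistical convergence of the first coordinate. This is exactly the phenomenon against which the opening example warned, but it is harmless here because of the revised notion of subsequence: for any open $U\ni x$ we have $\{n\in M_2:x_n\notin U\}\subset\{n\in M_1:x_n\notin U\}$, and the latter set has natural density $0$; since a subset of a density-zero set again has (upper, hence natural) density $0$, the set $\{n\in M_2:x_n\notin U\}$ has density $0$. Thus $(x_n)_{n\in M_2}$ still statistically converges to $x$. It is precisely at this point that one uses that density is computed in $\mathbb{N}$ and that the index set is not reindexed; under the naive ``reindexed subsequence'' this step would fail.

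Finally, both coordinate sequences $(x_n)_{n\in M_2}$ and $(y_n)_{n\in M_2}$ statistically converge, to $x$ and $y$, so by the product theorem above (only the finite case is needed) the sequence $(z_n)_{n\in M_2}$ statistically converges to $(x,y)\in X\times Y$. One may also argue directly: a basic open neighbourhood of $(x,y)$ has the form $U\times V$, and $\{n\in M_2:z_n\notin U\times V\}\subset\{n\in M_2:x_n\notin U\}\cup\{n\in M_2:y_n\notin V\}$ is a union of two density-zero sets, hence itself of density $0$; an arbitrary open neighbourhood contains such a basic one, so the containment passes through. As $M_2\subset M$ is cofinal with $\overline{d}(M_2)>0$, the sequence $(z_n)_{n\in M_2}$ is a nonthin statistically convergent subsequence of $(z_n)_{n\in M}$, and therefore $X\times Y$ is statistically compact.
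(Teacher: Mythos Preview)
Your proof is correct and follows essentially the same approach as the paper: iteratively pass to nonthin subsequences coordinate by coordinate, then use that $\{n\in M_2:z_n\notin U\times V\}$ is contained in a finite union of density-zero sets. The only cosmetic difference is that you reduce to two factors and make explicit the step that $\{n\in M_2:x_n\notin U\}\subset\{n\in M_1:x_n\notin U\}$ has density~$0$, whereas the paper handles all $n$ factors at once and leaves this inclusion implicit in its final union bound.
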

	\begin{proof}
		Let $X_i$ be a statistically compact space, $i=1,...n$ and let $X=\displaystyle\prod_{i=1}^{n}X_i$. Suppose $(x_m)_{m\in K}$ is a nonthin sequence in $X$. Since $X_1$ is statistically compact, there exists a nonthin subsequence $(\pi_1(x_m))_{m\in K_1}$ of $(\pi_1(x_m))_{m\in K}$ such that $(\pi_1(x_m))_{m\in K_1}$ is statistically converges to $x_1\in X_1$. As $X_2$ is statistically compact, there exists a nonthin subsequence $(\pi_2(x_m))_{m\in K_2}$ of $(\pi_2(x_m))_{m\in K_1}$ such that $(\pi_2(x_m))_{m\in K_2}$ is statistically converges to $x_2\in X_2$. Proceeding in this way we get $n$ nonthin sequences $(\pi_i(x_m))_{m\in K_i}$, $i=1,2,...,n$ such that $K_1\supset K_2\supset ...\supset K_n$ and $(\pi_i(x_m))_{m\in K_i}$ is statistically convergent to $x_i\in X_i$, $i=1,2,...,n$.\\Let $x=(x_1,x_2,...,x_n)\in X$ and let $U_i$ be open in $X_i$ such that $x_i\in U_i$. Since $\{m\in K_n:x_m\notin \displaystyle\bigcap_{i=1}^{n}\pi_{i}^{-1}(U_i)\}\subset \displaystyle\bigcup_{i=1}^{n}\{m\in K_i:\pi_i(x_m)\notin U_i\}$ and $d(\displaystyle\bigcup_{i=1}^{n}\{m\in K_i:\pi_i(x_m)\notin U_i\})=0$, $d(\{m\in K_n:x_m\notin \displaystyle\bigcap_{i=1}^{n}\pi_{i}^{-1}(U_i)\})=0$. Thus $(x_m)_{m\in K_n}$ is statistically converges to $x\in X$.
	\end{proof}
	\begin{example}\label{exx}
		Consider $X_n=\{1,...,n\}$ with discrete topology, $n\in \mathbb{N}$. Let $X=\displaystyle\prod_{n\in \mathbb{N}}^{}X_n$ be the product space. Define a sequence $(x_n)_{n\in \mathbb{N}}$ in $X$ such that for any $k\in \mathbb{N}$,
		\[ \pi_k(x_n)=\begin{cases}
			1,& \text{if }n=km,m\in \mathbb{N}\\2, & \text{if }n=km+1,m\in \mathbb{N}\cup \{0\}\\. &\\. &\\. &\\k, & \text{if }n=km+(k-1),m\in \mathbb{N}\cup \{0\} \end{cases} \]
		Suppose $(x_n)_{n\in A}$ is a nonthin subsequence of $(x_n)_{n\in \mathbb{N}}$. If possible let $(x_n)_{n\in A}$ be statistically convergent to $a=(a_m)_{m\in \mathbb{N}}\in X$. Since $d(\{m\in A:x_m\notin \pi_{i}^{-1}(\{a_i\})\})=0$, $\overline{d}(A)\leqslant\overline{d}(\{m\in A:x_m\in \pi_{i}^{-1}(\{a_i\})\})\leqslant\overline{d}(\{m\in \mathbb{N}:x_m\in \pi_{i}^{-1}(\{a_i\})\})=\frac{1}{i}$ for all $i\in \mathbb{N}$. Hence $\overline{d}(A)=0$, which is a contradiction. Therefore $X$ is not statistically compact.
	\end{example}
	\begin{note}\label{n}
		It follows from Example \ref{exx} that Cantor set is not statistically compact and so statistical compact subset of a seperable completely metrizable space is countable (see Corollary 3.6 in \cite{21}). 
	\end{note}
	\begin{theorem}
		Let $f:X\to Y$ be statistically continuous and let $\Delta Y$ be statistically closed. Then (a)$\implies$ (b)$\implies$ (c) where \\
		(a) If $(x_n)_{n\in M}$ is a nonthin sequence in $X$ with no nonthin subsequence statistically convergent in $X$ then so is $(f(x_n))_{n\in M}$ in $Y$.\\
		(b) If $B$ is statistically compact in $Y$ then so is $f^{-1}(B)$ in $X$.\\
		(c) f is statistically proper.
	\end{theorem}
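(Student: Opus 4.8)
The plan is to prove the two implications separately, and in both of them the recurring tools will be the same: lifting a sequence through $f$, invoking statistical continuity, and then using uniqueness of statistical limits in $Y$ (which is available because $\Delta Y$ is statistically closed, by the Corollary on diagonals) together with Theorem \ref{th4} to keep limits inside the sets where they belong. So before starting I would record the two facts I will lean on: (i) since $\Delta Y$ is statistically closed, a nonthin sequence in $Y$ has at most one statistical limit; and (ii) again since $\Delta Y$ is statistically closed, every statistically compact subspace of $Y$ is statistically closed.

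For the implication (a)$\implies$(b), let $B\subseteq Y$ be statistically compact and let $(x_n)_{n\in M}$ be a nonthin sequence in $f^{-1}(B)$. I would argue by contradiction: if $(x_n)_{n\in M}$ had no nonthin subsequence statistically convergent in $X$, then (a) would force $(f(x_n))_{n\in M}$ to have no nonthin subsequence statistically convergent in $Y$; but $(f(x_n))_{n\in M}$ is a nonthin sequence lying in the statistically compact set $B$, so it must have one, a contradiction. Hence there is a nonthin $M'\subseteq M$ with $(x_n)_{n\in M'}$ statistically convergent to some $x\in X$. Statistical continuity then gives $(f(x_n))_{n\in M'}$ statistically convergent to $f(x)$; since this is a nonthin sequence in $B$ and $B$ is statistically closed by Theorem \ref{th4}, we get $f(x)\in B$, i.e. $x\in f^{-1}(B)$. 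Thus $f^{-1}(B)$ is statistically compact.

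For (b)$\implies$(c), fix a space $Z$; to show $f\times 1_Z$ is statistically closed I take a statistically closed $C\subseteq X\times Z$ and a point $(y,z)\in\overline{(f\times 1_Z)(C)}^{ST}$, realised by a nonthin sequence $\big((y_n,z_n)\big)_{n\in M}$ in $(f\times 1_Z)(C)$ statistically convergent to $(y,z)$, and I lift it to $(w_n,z_n)\in C$ with $f(w_n)=y_n$. By the product theorem $(y_n)_{n\in M}$ and $(z_n)_{n\in M}$ statistically converge to $y$ and $z$. The plan is now to set $B=\{y\}\cup\{y_n:n\in M\}$, observe that $(w_n)_{n\in M}$ is a nonthin sequence in $f^{-1}(B)$, and apply (b): once $f^{-1}(B)$ is known to be statistically compact, I extract a nonthin $M''\subseteq M$ with $(w_n)_{n\in M''}$ statistically convergent to some $w\in X$. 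Then $(z_n)_{n\in M''}$ still statistically converges to $z$, so by the product theorem $(w_n,z_n)_{n\in M''}$ statistically converges to $(w,z)$, whence $(w,z)\in C$ because $C$ is statistically closed. Finally statistical continuity of $f$ makes $(y_n)_{n\in M''}=(f(w_n))_{n\in M''}$ statistically converge to $f(w)$, while it also statistically converges to $y$; uniqueness of statistical limits in $Y$ forces $f(w)=y$, so $(f\times 1_Z)(w,z)=(y,z)$ with $(w,z)\in C$, giving $(y,z)\in(f\times 1_Z)(C)$ as required.

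The step I expect to be the genuine obstacle is justifying that the set $B=\{y\}\cup\{y_n:n\in M\}$ is statistically compact, since that is exactly the hypothesis needed to apply (b) to $f^{-1}(B)$. The natural attempt is a dichotomy on a nonthin sequence $(b_k)_{k\in L}$ inside $B$: if some single value is attained on an index set of positive upper density, the corresponding constant subsequence statistically converges to that value, which lies in $B$; the remaining case, where every value of $B$ is attained only on a thin set, is the delicate one, because one would like to conclude that $(b_k)_{k\in L}$ statistically converges to $y$, yet without first countability the values $y_n$ outside a neighbourhood of $y$ need not form a finite set and may be revisited with positive aggregate density. Handling this last case carefully (perhaps by first thinning $M$ so that the value set of $(y_n)$ accumulates only at $y$) is where the real work of the proof sits; everything else is the routine bookkeeping sketched above.
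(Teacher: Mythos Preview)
Your argument for (a)$\Rightarrow$(b) and your outline for (b)$\Rightarrow$(c) match the paper's proof almost line for line: the paper too sets $B=\{f(x_n):n\in M\}\cup\{y\}$, applies (b) to $f^{-1}(B)$, extracts a statistically convergent nonthin subsequence of $(x_n)$, and finishes with uniqueness of statistical limits. The only difference is that where you flag the statistical compactness of $B$ as ``the genuine obstacle'', the paper simply asserts ``Since $\Delta Y$ is statistically closed, $B=\{f(x_n):n\in M\}\cup\{y\}$ is statistically compact'' and offers no further justification.

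Your suspicion is well placed: that assertion does not follow in general. Take $Y=\mathbb{N}\cup\{\infty\}$, where every subset of $\mathbb{N}$ is open and a set containing $\infty$ is open precisely when its complement in $\mathbb{N}$ has density~$0$. One checks that $\Delta Y$ is statistically closed, and the sequence $y_n=n$ statistically converges to $\infty$, so here $B=Y$. But the nonthin sequence $b_k=k^2$ in $B$ has no nonthin statistically convergent subsequence: it cannot statistically converge to any isolated point $m\in\mathbb{N}$, and it cannot statistically converge to $\infty$ because $U=Y\setminus\{k^2:k\in\mathbb{N}\}$ is an open neighbourhood of $\infty$ missed on the entire index set. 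Thus the step you isolated is not merely delicate but actually fails; the paper's proof shares the very gap you identified rather than resolving it, so your dichotomy on values and your proposed ``thinning of $M$'' would have to be replaced by a genuinely different idea (or additional hypotheses) for (b)$\Rightarrow$(c) to go through along these lines.
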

	\begin{proof}
		(a)$\implies$ (b) Let $B$ be statistically compact subset of $Y$ and let $(x_n)_{n\in M}$ be a nonthin sequence in $f^{-1}(B)$. There exists a nonthin subsequence $(x_n)_{n\in N}$ of $(x_n)_{n\in M}$ such that $(f(x_n))_{n\in N}$ is statistically convergent to $y$ for some $y\in Y$. So $(x_n)_{n\in N}$ has a statistically convergent subsequence , say $(x_n)_{n\in P}$ which statistically converges to $x\in X$ and consequently $(f(x_n))_{n\in P}$ statistically converges to $f(x)$. Since $\Delta y$ is statistically closed, $f(x)=y$.\\
		(b)$\implies$ (c) Let $A$ be statistically closed in $X\times Z$. Suppose $(x_n,z_n)_{n\in M}$ is a nonthin sequence in $A$ such that $(f(x_n),z_n)_{n\in M}$ statistically converges to $(y,z)\in Y\times Z$. Since $\Delta y$ is statistically closed, $B=\{f(x_n): n\in M\}\cup \{y\}$ is statistically compact and hence so is $f^{-1}(B)$. There exists a nonthin subsequence $(x_n)_{n\in N}$ of $(x_n)_{n\in M}$ such that $(x_n)_{n\in N}$ is statistically convergent to $x\in f^{-1}(B)$. Then $(x,z)\in A$ because $(x_n,z_n)_{n\in N}$ statistically converges to $(x,z)$ and $A$ is statistically closed. So $(f(x),z)\in f(A)$ and $f(x)=y$ as $\Delta Y$ is statistically closed.
		\end{proof} 
	\begin{theorem}
		Let $(X,\tau$) be a topological space and $A\subset X$. Then $A$ is statistically compact in $(X,\tau)$ if and only if $A$ is statistically compact in $(X,\tau_{ST})$.  
	\end{theorem}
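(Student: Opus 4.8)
The plan is to reduce everything to a single observation: statistical convergence of a nonthin sequence is literally the same notion in $(X,\tau)$ and in $(X,\tau_{ST})$, and likewise in the two induced subspace topologies on $A$. Since statistical compactness of $A$ is phrased purely in terms of which nonthin sequences in $A$ admit nonthin statistically convergent subsequences, and since ``nonthin'' is a density condition on index sets that does not refer to the topology at all, once the two convergence relations are seen to coincide the equivalence of the two compactness statements is immediate.

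First I would record the key convergence lemma: for a nonthin sequence $(x_n)_{n\in M}$ in $X$ and a point $x\in X$, the sequence statistically converges to $x$ in $(X,\tau)$ if and only if it statistically converges to $x$ in $(X,\tau_{ST})$. The implication from $\tau_{ST}$ to $\tau$ is free, since $\tau\subset\tau_{ST}$, so the density requirement imposed over all $\tau_{ST}$-open neighbourhoods of $x$ in particular holds over all $\tau$-open ones. For the reverse implication I would use the defining description of $\tau_{ST}$: a set $V$ is $\tau_{ST}$-open exactly when $F:=X\setminus V$ is statistically closed in $(X,\tau)$. Fix such a $V$ with $x\in V$ and set $N=\{n\in M:x_n\in F\}$. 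If $\overline{d}(N)>0$, then $N$ is a nonthin index set and $(x_n)_{n\in N}$ is a nonthin sequence lying entirely in $F$; moreover, for every $\tau$-open $U$ with $x\in U$ one has $\{n\in N:x_n\notin U\}\subset\{n\in M:x_n\notin U\}$, whose right-hand side has density $0$ by $\tau$-statistical convergence, so the left-hand side has density $0$ too. Hence $(x_n)_{n\in N}$ statistically converges to $x$ in $(X,\tau)$, which forces $x\in\overline{F}^{ST}=F$ and contradicts $x\in V$. Therefore $d(N)=0$, i.e.\ $d(\{n\in M:x_n\notin V\})=0$, which is exactly $\tau_{ST}$-statistical convergence.

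Next I would transfer this to the subspace $A$. For an $A$-valued sequence $(x_n)_{n\in M}$ and a limit $x\in A$, a basic $\tau|_A$-neighbourhood of $x$ has the form $A\cap W$ with $W\in\tau$, and since every $x_n$ lies in $A$ one has $x_n\notin A\cap W$ precisely when $x_n\notin W$; consequently statistical convergence in $(A,\tau|_A)$ agrees with statistical convergence in $(X,\tau)$, and the same remark applies to $(A,\tau_{ST}|_A)$ versus $(X,\tau_{ST})$. Chaining these two identifications with the lemma shows that statistical convergence of an $A$-valued nonthin sequence to an $A$-point is the same relation whether computed in $\tau|_A$ or in $\tau_{ST}|_A$. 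As the notion of a nonthin subsequence $N\subset M$ is topology-free, a nonthin sequence in $A$ has a nonthin statistically convergent subsequence in $(X,\tau)$ if and only if it has one in $(X,\tau_{ST})$, and the two assertions of statistical compactness of $A$ are thereby equivalent.

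The step I expect to be the main obstacle is the implication from $\tau$-statistical convergence to $\tau_{ST}$-statistical convergence in the lemma, since that is exactly where the enlarged supply of $\tau_{ST}$-open sets could a priori break convergence. The argument goes through only because statistical convergence is inherited by subsequences in the present index-preserving sense: passing from $M$ to a subset $N$ can merely shrink each exceptional set $\{n:x_n\notin U\}$, so density $0$ is preserved. This is precisely the property that the reindexing examples at the start of the paper lack, so I would be careful to invoke the subset-monotonicity of upper density directly rather than any general ``subsequence of a convergent sequence'' principle.
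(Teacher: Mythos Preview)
Your proof is correct and mirrors the paper's argument: both reduce the nontrivial direction to showing that $\tau$-statistical convergence of a nonthin sequence implies $\tau_{ST}$-statistical convergence, via the observation that if a nonthin portion of the sequence lay in a statistically closed set $F$ not containing the limit $x$, that portion would itself $\tau$-statistically converge to $x$ and force $x\in\overline{F}^{ST}=F$. You package this as a standalone convergence lemma and treat the subspace aspect more explicitly than the paper does, but the substance is the same.
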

\begin{proof}
	Let $A$ be statistically compact in $(X,\tau)$ and let $(x_n)_{n\in M}$ be a nonthin sequence in $A$. Then there exists a nonthin subsequence $(x_n)_{n\in N}$ of $(x_n)_{n\in M}$ such that $(x_n)_{n\in N}$ statistically converges to $x\in A$ in $(X,\tau)$. Suppose $C$ be a statistically closed subset of $(X,\tau)$ such that $x\notin C$ and $P=\{n\in N: x_n\in C\}$. If $\overline{d}(P)>0$ then $(x_n)_{n\in P}$ will statistical convergent to $x$ in $(X,\tau)$ which will imply that $x\in C$. So $d(P)=0$. Thus  $A$ is statistically compact in $(X,\tau_{ST})$.\\ Converse follows directly as $\tau \subset \tau_{ST}$.
\end{proof}
	\begin{theorem}\label{th3}
		Let $X$ be a topological space and let $C\subset X$. $C$ is statistically compact implies that for any nonthin sequence $(x_n)_{n\in M}$ in $X$ which has no nonthin statistically convergent subsequence in $X$, there exists $N\subset M$ such that $d(N)=0$ and $x_n\in X\backslash C$ for all $n\in M\backslash N$. Converse holds if $C$ is statistically closed.
	\end{theorem}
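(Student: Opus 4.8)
The plan is to prove both implications by contradiction, in each case manufacturing a forbidden subsequence. For the forward direction, assume $C$ is statistically compact and let $(x_n)_{n\in M}$ be a nonthin sequence in $X$ with no nonthin statistically convergent subsequence in $X$. I would set $N=\{n\in M:x_n\in C\}$, so that automatically $x_n\in X\backslash C$ for every $n\in M\backslash N$; the entire content is then to show $d(N)=0$. Suppose instead $\overline{d}(N)>0$. Then $(x_n)_{n\in N}$ is a nonthin sequence lying entirely in $C$, so statistical compactness of $C$ yields a nonthin subsequence $(x_n)_{n\in N'}$ statistically convergent to some point of $C\subset X$. Since $N'\subset N\subset M$ is infinite it is cofinal in $M$, so $(x_n)_{n\in N'}$ is a nonthin statistically convergent subsequence of $(x_n)_{n\in M}$ in $X$, contradicting the hypothesis. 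Hence $\overline{d}(N)=0$, i.e. $d(N)=0$, as required.

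For the converse I would assume the displayed property together with statistical closedness of $C$, and show every nonthin sequence $(x_n)_{n\in M}$ in $C$ admits a nonthin statistically convergent subsequence whose limit lies in $C$. The pivotal observation is that the hypothesis forces the sequence to possess a nonthin statistically convergent subsequence in $X$ at all: were there none, the property would supply $N\subset M$ with $d(N)=0$ and $x_n\in X\backslash C$ for all $n\in M\backslash N$; but $x_n\in C$ for every $n\in M$, which forces $M\backslash N=\emptyset$, whence $M\subset N$ and $d(M)=0$, contradicting that $(x_n)_{n\in M}$ is nonthin. So some nonthin subsequence $(x_n)_{n\in N'}$ statistically converges to a point $x\in X$. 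Since $(x_n)_{n\in N'}$ is a nonthin sequence in $C$ statistically convergent to $x$, statistical closedness gives $x\in\overline{C}^{ST}=C$, so the limit lands in $C$ and $C$ is statistically compact.

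The one point I expect to need explicit justification, rather than any genuine difficulty, is that statistical convergence computed in the subspace $C$ agrees with statistical convergence computed in $X$ for a sequence lying in $C$ with limit in $C$: an open set of $C$ has the form $C\cap U$ with $U$ open in $X$, and for $x_n\in C$ the condition $x_n\notin C\cap U$ coincides with $x_n\notin U$, so the densities $d(\{n:x_n\notin U\})$ and $d(\{n:x_n\notin C\cap U\})$ are literally the same. Granting this, both arguments are short contrapositives. The structural subtlety worth flagging is exactly where statistical closedness is consumed: it enters only in the converse, purely to guarantee that the limit produced in $X$ belongs to $C$. That this hypothesis cannot be dropped is visible from the earlier example, where the subspace $X'=\{\tfrac1n:n\in\mathbb{N}\}$ of the statistically compact space $X=\{\tfrac1n\}\cup\{0\}$ satisfies the displayed property vacuously (a statistically compact $X$ has no nonthin sequence lacking a nonthin statistically convergent subsequence) yet is not statistically compact, and indeed not statistically closed.
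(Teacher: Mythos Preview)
Your proof is correct and follows essentially the same route as the paper: both define $N=\{n\in M:x_n\in C\}$ for the forward direction and use statistical compactness of $C$ to rule out $\overline d(N)>0$, and both handle the converse by observing that a nonthin sequence in $C$ must admit a nonthin statistically convergent subsequence in $X$ (else the hypothesis forces $d(M)=0$), with statistical closedness pinning the limit in $C$. Your write-up is simply more explicit where the paper is terse, and your added remarks on subspace-versus-ambient convergence and on the necessity of statistical closedness are accurate elaborations rather than a different strategy.
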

	\begin{proof}
		Let $C$ be statistically compact. Suppose $(x_n)_{n\in M}$ is nonthin sequence in $X$ which has no nonthin statistically convergent subsequence in $X$. Then $d(N)=0$ and $x_n\in X\backslash C$ for all $n\in M\backslash N$ where $N=\{n\in M: x_n\in C\}$.\\ Suppose $C$ is statistically closed and converse holds. Let $(x_n)_{n\in M}$ be nonthin sequence in $C$. Since $x_n\in C$ for all $n\in M$, $(x_n)_{n\in M}$ has a nonthin statistically convergent subsequence, say $(x_n)_{n\in N}$ which statistically converges to $x$ for some $x\in X$. This implies $x\in \overline{C}^{ST}=C$. Therefore $C$ is statistically compact.
	\end{proof}
	\begin{theorem}
		Suppose $(X,\tau)$ is a topological space which is not statistically compact and $\infty^{X}$ is a point not in $X$. Define $X^s= X\cup \{\infty^{X}\}$ and $\tau_s^X= \tau \cup \{X^s\backslash C: C$ is closed and statistically compact subset of $X\}$. Then $(X^s,\tau_s^X)$ is statistically compact space.
	\end{theorem}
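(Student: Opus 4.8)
The plan is to mimic the classical Alexandroff construction, letting Theorem \ref{th3} do the essential work in place of the usual finite-subcover argument. Two things must be established: that $\tau_s^X$ is genuinely a topology, and that $(X^s,\tau_s^X)$ is statistically compact. For the first I would dispatch the closure properties as preliminary bookkeeping. The only nonroutine ones are: a union $\bigcup_i(X^s\backslash C_i)=X^s\backslash\bigcap_i C_i$ is again admissible because $\bigcap_i C_i$ is closed and, being a closed subset of the statistically compact set $C_1$, is itself statistically compact (closed subspace of a statistically compact space is statistically compact); and a finite intersection $(X^s\backslash C_1)\cap(X^s\backslash C_2)=X^s\backslash(C_1\cup C_2)$ is admissible because a finite union of statistically compact sets is statistically compact, since if $\{n:x_n\in C_1\cup C_2\}$ is nonthin then, by subadditivity of upper density, one of $\{n:x_n\in C_1\}$, $\{n:x_n\in C_2\}$ is nonthin and yields a nonthin subsequence inside a statistically compact factor. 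Intersections and unions involving a $\tau$-open set reduce to these cases.

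The heart of the argument is the observation that a sequence statistically converges to $\infty^X$ precisely when it escapes, in the density sense, every statistically compact set. Since every $\tau_s^X$-neighbourhood of $\infty^X$ contains a basic one of the form $X^s\backslash C$ with $C$ closed and statistically compact, and since $\{n:x_n\notin X^s\backslash C\}=\{n:x_n\in C\}$, a nonthin sequence $(x_n)_{n\in M}$ statistically converges to $\infty^X$ if and only if $d(\{n\in M:x_n\in C\})=0$ for every closed statistically compact $C\subseteq X$. Dually, for a point $x\in X$ every $\tau_s^X$-neighbourhood refines a $\tau$-neighbourhood, so statistical convergence to $x$ in $(X,\tau)$ already forces statistical convergence to $x$ in $(X^s,\tau_s^X)$.

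Now take an arbitrary nonthin sequence $(x_n)_{n\in M}$ in $X^s$. If $\overline{d}(\{n\in M:x_n=\infty^X\})>0$, the corresponding constant subsequence is a nonthin subsequence statistically converging to $\infty^X$, and we are done. Otherwise that index set is thin, so $M'=\{n\in M:x_n\in X\}$ is nonthin and $(x_n)_{n\in M'}$ is a nonthin sequence in $X$. If $(x_n)_{n\in M'}$ possesses a nonthin statistically convergent subsequence in $(X,\tau)$, the criterion established above shows it also converges in $(X^s,\tau_s^X)$ and we are finished. In the remaining case $(x_n)_{n\in M'}$ has no nonthin statistically convergent subsequence in $X$; here I would invoke Theorem \ref{th3} once for each closed statistically compact $C$: it supplies $N_C\subseteq M'$ with $d(N_C)=0$ and $x_n\in X\backslash C$ for all $n\in M'\backslash N_C$, whence $\{n\in M':x_n\in C\}\subseteq N_C$ has density zero. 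By the criterion above, $(x_n)_{n\in M'}$ statistically converges to $\infty^X$, completing the proof.

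The main obstacle is not any single computation but getting the right dichotomy on $(x_n)_{n\in M'}$ and recognising that Theorem \ref{th3}, applied uniformly across all closed statistically compact $C$, is exactly the tool that converts ``no nonthin statistically convergent subsequence in $X$'' into statistical convergence to $\infty^X$. I would also remark that the hypothesis that $X$ is not statistically compact is not actually needed to prove statistical compactness of $X^s$; its role is rather to guarantee that such a subsequence-free sequence exists, so that $\infty^X$ lies in the statistical closure of $X$ and the compactification is nondegenerate.
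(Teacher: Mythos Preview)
Your proposal is correct and follows essentially the same route as the paper: split off the indices landing at $\infty^X$, and on the remaining nonthin subsequence in $X$ either find a nonthin statistically convergent subsequence there or invoke Theorem~\ref{th3} to force statistical convergence to $\infty^X$. You simply fill in more of the bookkeeping (the explicit neighbourhood criterion at $\infty^X$, and the observation that $\tau_s^X$-convergence to a point of $X$ is no stronger than $\tau$-convergence) that the paper leaves implicit.
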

	\begin{proof}
		Since closed subspace of a statistically compact space is statistically compact and collection of all statistically compact subspace of $X$ is closed under finite union and arbitrary intersection, $\tau_s^X$ forms a topology on $X^s$.\\ Let $(x_n)_{n\in K}$ be nonthin sequence in $X^s$. Suppose $K_1=\{n\in K: x_n\in X\}$. If $d(K_1)=0$ then the proof is done. Let $\overline{d}(K_1)>0$. If $(x_n)_{n\in K_1}$ has nonthin statistical convergent subsequence in $X$ then the proof will done. Suppose $(x_n)_{n\in K_1}$ has no nonthin statistical convergent subsequence in $X$. From Theorem \ref{th3} it follows that $(x_n)_{n\in K_1}$ statistically converges to $\infty^{X}$.
	\end{proof}
	\begin{note}
		Let's call $(X^s,\tau_s^X)$ one point statistical compactification of $(X,\tau)$.
	\end{note}
	\begin{theorem}
		Let $X$ be statistical sequential. Then $X^s$ is statistical sequential. Also, $\Delta X^s$ is statistically closed if $\Delta X$ is statistically closed in addition.
	\end{theorem}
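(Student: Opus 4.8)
The plan is to exploit the way $X$ sits inside $X^s$. First I would record the structural facts. Since $X\in\tau\subseteq\tau_s^X$, the set $X$ is open in $X^s$, so $\{\infty^{X}\}$ is closed; and intersecting the two kinds of members of $\tau_s^X$ with $X$ (a $\tau$-set stays itself, while $(X^s\setminus C)\cap X=X\setminus C$ is $\tau$-open because $C$ is closed) shows that the subspace topology $X$ inherits from $X^s$ is exactly $\tau$. From the general subspace behaviour of statistical convergence this yields the lemma I will use throughout: a nonthin $X$-valued sequence statistically converges to a point $x\in X$ in $X$ if and only if it does so in $X^s$. I will also use that, by the preceding theorem, $(X^s,\tau_s^X)$ is statistically compact.

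For the first assertion I must show every statistically closed $F\subseteq X^s$ is $\tau_s^X$-closed, splitting on whether $\infty^{X}\in F$. If $\infty^{X}\in F$, then $X^s\setminus F=X\setminus(F\cap X)$, so it suffices that $F\cap X$ be $\tau$-closed; since $X$ is statistical sequential this is the same as $F\cap X$ being statistically closed in $X$, which follows from the lemma, as a nonthin sequence in $F\cap X$ statistically converging in $X$ to some $x$ also converges in $X^s$, whence $x\in\overline{F}^{ST}=F$ and $x\in X$. If $\infty^{X}\notin F$ (so $F\subseteq X$), I would show $F$ is both closed and statistically compact in $X$; then $X^s\setminus F$ is a basic neighbourhood of $\infty^{X}$, finishing the case. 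Closedness is as before. For statistical compactness take a nonthin sequence in $F$; statistical compactness of $X^s$ gives a nonthin subsequence statistically convergent in $X^s$ to some $a$, and since $a\in\overline{F}^{ST}=F\subseteq X$ we have $a\neq\infty^{X}$, so by the lemma the subsequence converges to $a\in F$ in $X$. \emph{Here the statistical closedness of $F$ in $X^s$ is exactly what rules out escape to $\infty^{X}$}, which is why this direction is clean.

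For the second assertion, by the corollary characterising uniqueness of statistical limits through statistical closedness of the diagonal (applied to $X^s$), it is enough to prove that a nonthin sequence in $X^s$ has at most one statistical limit; by the product theorem a nonthin diagonal sequence statistically convergent to $(p,q)$ produces one nonthin sequence $(x_n)_{n\in M}$ statistically convergent to both $p$ and $q$ in $X^s$, and I must deduce $p=q$. The case $p=q=\infty^{X}$ is trivial. If $p,q\in X$, then because $\{\infty^{X}\}$ is closed the set $M'=\{n\in M:x_n\in X\}$ is nonthin and $(x_n)_{n\in M'}$ statistically converges to both $p$ and $q$ in $X^s$, hence in $X$ by the lemma, so $p=q$ by uniqueness of limits in $X$ (i.e.\ by statistical closedness of $\Delta X$). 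The substantive case is $p\in X$, $q=\infty^{X}$: again $M'$ is nonthin and $(x_n)_{n\in M'}$ statistically converges to $p$ in $X$ while simultaneously converging to $\infty^{X}$. I would seek a contradiction by exhibiting a single closed, statistically compact $C\subseteq X$ with $\overline{d}(\{n\in M':x_n\in C\})>0$; the natural candidate is $C=\overline{\{x_n:n\in M'\}}$, which is $\tau$-closed and contains every $x_n$, hence captures density $\overline{d}(M')>0$, whereas convergence to $\infty^{X}$ forces $d(\{n:x_n\in C\})=0$.

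The hard part is precisely the verification that this $C$, the closure of the range of a nonthin statistically convergent sequence, is statistically compact; equivalently, that no nonthin sequence converging to a point of $X$ can also escape to $\infty^{X}$. The naive routes are circular: extracting a convergent subsequence from the statistical compactness of $X^s$ merely relocates the same dichotomy (limit in $X$ versus $\infty^{X}$) to the subsequence, and invoking statistical closedness of $C$ in $X^s$ is, by the characterisation obtained in the first assertion, equivalent to the very statistical compactness of $C$ being sought. I therefore expect the decisive step to be a direct proof that the range-closure of a statistically convergent nonthin sequence is statistically compact, modelled on the treatment of $\{\tfrac1n:n\in\mathbb{N}\}\cup\{0\}$ and using that every nonthin subsequence of $(x_n)_{n\in M'}$ must statistically converge to $p$; this is where the statistical-sequential hypothesis on $X$ is expected to carry the real weight.
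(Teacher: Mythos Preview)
Your treatment of the first assertion is exactly the paper's: split on whether $\infty^X\in F$; in the first case reduce to $F\cap X$ being closed in $X$ via the statistical--sequential hypothesis, and in the second case show $F\subset X$ is closed and statistically compact in $X$ by using the statistical compactness of $X^s$ together with $\overline{F}^{ST}=F$ to keep the subsequential limit inside $F$.

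For the second assertion you have also arrived at the paper's strategy, but the paper dispatches what you call the ``hard part'' differently from what you anticipate. The paper works with a nonthin sequence $(x_n)_{n\in M}$ in $\Delta X^s$ converging to some $a\neq(\infty^X,\infty^X)$, sets $y_n=\pi_1(x_n)$, passes to $N=\{n\in M:x_n\in\Delta X\}$, and takes as its candidate set $A_1=\{y_n:n\in N\}\cup\{\pi_1(a)\}$ --- the bare range plus the limit, not the closure. It then simply \emph{asserts} that $A_1$ is statistically compact, the sole justification offered being ``since a nonthin sequence in $X$ can statistically converge to at most one point of $X$''. From there it invokes Theorem~\ref{th4} to get $A_1$ statistically closed in $X$, and only \emph{after} that uses the statistical--sequential hypothesis to conclude $A_1$ is closed in $X$; combined with statistical compactness this makes $X^s\setminus A_1$ a $\tau_s^X$--neighbourhood of $\infty^X$, forcing $\pi_2(a)\in A_1\subset X$, and one finishes via statistical closedness of $\Delta X$. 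So the role you forecast for the statistical--sequential hypothesis (carrying the compactness argument) is not the paper's: it is used purely for the passage ``statistically closed $\Rightarrow$ closed''.

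Your hesitation about the key step is warranted. The paper gives no further argument for the statistical compactness of $A_1$, and the bare implication ``uniqueness of limits $\Rightarrow$ range--plus--limit is statistically compact'' is false. For instance in $X=\mathbb{R}$ (first countable, hence statistical sequential, with $\Delta X$ closed) take $y_n=n$ when $n$ is a perfect square and $y_n=0$ otherwise; then $(y_n)_{n\in\mathbb{N}}$ statistically converges to $0$, yet $A_1=\{0\}\cup\{k^2:k\ge 1\}$ is discrete and the nonthin sequence $z_m=m^2$ in $A_1$ admits no nonthin statistically convergent subsequence. Thus neither your closure $C$ nor the paper's $A_1$ is statistically compact in general, and the step you isolated is not merely hard but, as written in the paper, an actual gap; ruling out the mixed case $p\in X$, $q=\infty^X$ requires a different device than either of you has supplied.
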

	\begin{proof}
		Let $Y$ be statistically closed subset of $X^s$. Then $Y\cap X$ is statistically closed in $X$ and so closed in $X$. Therefore $Y$ is closed in $X^s$ if $\infty^{X}\in Y$. Suppose $\infty^{X}\notin Y$. Then $Y$ is closed in $X$. Since $Y$ is statistically closed in $X^s$, $Y$ is statistically compact in $X^s$ and so in $X$. Therefore $Y$ is closed in $X^s$. \\
		Let $\Delta X$ be statistically closed. Suppose $(x_n)_{n\in M}$ is a nonthin sequence in $\Delta X^s$ which statistically converges to $a\in (X^s\times X^s)\backslash \{(\infty^X,\infty^X)\}$ and $y_n=\pi_1(x_n)$, $n\in M$. Then $\overline{d}(N)>0$ where $N=\{n\in M: x_n\in \Delta X\}$ and one of $\pi_1(a)$ and $\pi_2(a)$ must be in $X$, say $\pi_1(a)\in X$. Since nonthin sequence in $X$ can statistically convergent to atmost one point of $X$, $A_1=\{y_n: n\in N\}\cup \{\pi_1(a)\}$ is statistically compact and so statistically closed in $X$ (see Theorem \ref{th4}). Thus $A_1$ is closed in $X$ because $X$ is statistical sequential. Therefore $A_1$ is closed in $X^s$. Since $(y_n)_{n\in N}$ is statistically convergent to $\pi_2(a)$, $\pi_2(a)\in X$. Consequently $a\in \Delta X$ and so $\Delta X^s$ is statistically closed.
	\end{proof}
	\begin{theorem}
		Let $X$ be an open subspace of a topological space $Y$ such that $\Delta Y$ is statistically closed and let $f:Y\to X^s$ be defined by   
		\begin{center}
			$f(x)=\begin{cases}x,& \text{if }x\in X\\ \infty^X, & \text{otherwise. } \end{cases} $
		\end{center}
		Then $f$ is statistically continuous.
	\end{theorem}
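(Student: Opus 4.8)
The plan is to verify statistical continuity straight from the definition, splitting into the two cases $y\in X$ and $y\in Y\setminus X$ according to where the statistical limit $y$ of a given nonthin sequence $(y_n)_{n\in K}$ in $Y$ lies. Before either case I would record one elementary fact used throughout: if $(y_n)_{n\in K}$ statistically converges to $y$ and $N\subseteq K$ is nonthin, then $(y_n)_{n\in N}$ statistically converges to $y$ too, since for every open $U\ni y$ we have $\{n\in N:y_n\notin U\}\subseteq\{n\in K:y_n\notin U\}$ and a subset of a density-zero set has density zero. I would also note that, because $\Delta Y$ is statistically closed, a nonthin sequence in $Y$ has at most one statistical limit (the earlier Corollary, applied with $Y$ in place of $X$); this uniqueness is exactly what will separate points of $X$ from $\infty^X$.

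First suppose $y\in X$, so $f(y)=y$. The key observation is that the sets $U\in\tau$ with $y\in U$ form a neighbourhood base at $y$ in $X^s$: any basic neighbourhood $X^s\setminus C$ with $y\notin C$ contains the smaller $\tau$-open set $X\setminus C\ni y$. Hence it suffices to test the convergence condition against such $U$. Since $X$ is open in $Y$, each such $U$ is open in $Y$ as well, and because $U\subseteq X$ while $f$ sends $Y\setminus X$ to $\infty^X\notin U$, one checks that $\{n\in K:f(y_n)\notin U\}=\{n\in K:y_n\notin U\}$; the right-hand set has density zero by statistical convergence of $(y_n)_{n\in K}$ to $y$ in $Y$. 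Passing from the base to arbitrary neighbourhoods is routine, so $(f(y_n))_{n\in K}$ statistically converges to $y=f(y)$.

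The substantive case is $y\in Y\setminus X$, where $f(y)=\infty^X$ and the neighbourhoods of $\infty^X$ are precisely the sets $X^s\setminus C$ with $C$ closed and statistically compact in $X$. Fixing such a $C$, I must show $d(\{n\in K:y_n\in C\})=0$ (using once more that $f(y_n)\in C$ forces $y_n\in C$). I would argue by contradiction: if $L:=\{n\in K:y_n\in C\}$ has $\overline{d}(L)>0$, then $(y_n)_{n\in L}$ is a nonthin sequence in the statistically compact set $C$, so it admits a nonthin subsequence $(y_n)_{n\in L'}$ statistically converging to some $c\in C$; this convergence, occurring in the subspace $C$, also holds in $Y$ since $c\in C\subseteq X\subseteq Y$. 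On the other hand the preliminary fact gives that $(y_n)_{n\in L'}$ statistically converges to $y$. Uniqueness of statistical limits in $Y$ then forces $c=y$, contradicting $c\in X$ and $y\notin X$. Hence $d(L)=0$ and $(f(y_n))_{n\in K}$ statistically converges to $\infty^X=f(y)$.

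I expect the infinity case to be the main obstacle: the argument closes only because statistical compactness of $C$ produces a limit \emph{inside} $X$, which the uniqueness coming from $\Delta Y$ being statistically closed then contradicts against $y\notin X$; extracting the correct nonthin subsequence within $C$ and tracking that its limit remains in $X\subseteq Y$ are the delicate points. By contrast, the openness of $X$ in $Y$ is used silently but essentially in the first case, to guarantee that $\tau$-open neighbourhoods of $y$ are genuinely open in $Y$, so I would take care to invoke it there.
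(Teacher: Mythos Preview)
Your proof is correct and follows essentially the same two-case strategy as the paper: for $y\in X$ you use the equality $\{n\in K:f(y_n)\notin U\}=\{n\in K:y_n\notin U\}$ for $U\subseteq X$ together with the openness of $X$ in $Y$, and for $y\in Y\setminus X$ you argue (by contradiction, via statistical compactness of $C$ and the uniqueness of statistical limits coming from $\Delta Y$ statistically closed) that $d(\{n\in K:y_n\in C\})=0$. The paper merely compresses your second-case contradiction into the single sentence ``Since $C$ is statistically compact and $\Delta Y$ is statistically closed, $d(\{n\in M:x_n\in C\})=0$'', but the underlying reasoning is the same.
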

	\begin{proof}
		Suppose $(x_n)_{n\in M}$ is nonthin sequence in $Y$ which statistically converges to $y\in Y$. Since $\{n\in M: x_n\notin U\}=\{n\in M: f(x_n)\notin U\}$ for any subset $U$ of $X$, $(f(x_n))_{n\in M}$ statistically converges to $f(y)$ if $y\in X$. Let $y\in Y\backslash X$ and let $C$ be closed statistically compact subset of $X$. Then $\{n\in M: f(x_n)\notin X^s\backslash C\}=\{n\in M: x_n\in C\}$. Since $C$ is statistically compact and $\Delta Y$ is statistically closed, $d(\{n\in M: x_n\in C\})=0$.
	\end{proof}
	\begin{corollary}
		Let $X$ be an open statistical sequential subspace of a statistically compact space $Y$ such that $\Delta Y$ is statistically closed and $Y\backslash X$ has exactly one point.  Then $f$ is statistical homeomorphism where $f:Y\to X^s$ is the unique bijection which is identity on $X$. 
	\end{corollary}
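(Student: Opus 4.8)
The plan is to route the argument through the Proposition on one-one statistically continuous maps. Because $Y\setminus X$ is a single point, say $p$, and $f$ is the identity on $X$ with $f(p)=\infty^X$, the map $f$ is a bijection. Its statistical continuity is not something I would reprove: the immediately preceding theorem establishes exactly this under the hypotheses that $X$ is open in $Y$ and $\Delta Y$ is statistically closed, both of which are assumed here. Thus $f$ is a one-one statistically continuous map, and by the Proposition it suffices to show that $f$ is statistically closed, which then upgrades $f$ to a statistical homeomorphism.

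To prove $f$ statistically closed, I would take $B$ statistically closed in $Y$. Since $Y$ is statistically compact, $B$ is statistically compact (a statistically closed subspace of a statistically compact space is statistically compact), and, $f$ being statistically continuous, $f(B)$ is then a statistically compact subset of $X^s$ (statistically continuous image of a statistically compact set is statistically compact). The problem therefore reduces to a single assertion: statistically compact subsets of $X^s$ are statistically closed. By Theorem \ref{th4} this holds as soon as $\Delta X^s$ is statistically closed.

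Securing $\Delta X^s$ statistically closed is where the remaining two hypotheses do their work. First I would check that $\Delta X$ is statistically closed: a nonthin sequence in $X$ is a nonthin sequence in $Y$, and statistical convergence in the subspace $X$ forces statistical convergence in $Y$, so the ``at most one statistical limit'' property of $Y$ (equivalent, by the earlier Corollary, to $\Delta Y$ being statistically closed) descends to $X$. Then, with $X$ statistical sequential and $\Delta X$ statistically closed, the earlier theorem on $X^s$ yields $\Delta X^s$ statistically closed. Feeding this into Theorem \ref{th4} makes $f(B)$ statistically closed, so $f$ is statistically closed and, by the Proposition, a statistical homeomorphism.

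I expect the main obstacle to be precisely this last step: the subspace transfer $\Delta Y\ \text{statistically closed}\Rightarrow\Delta X\ \text{statistically closed}$ together with the correct invocation of the $X^s$-diagonal theorem, since these are the two places where the statistical-sequential and diagonal hypotheses are genuinely used; the compactness-preservation steps are otherwise routine citations. As a sanity check I would keep in mind a more hands-on alternative that bypasses the Proposition, namely verifying directly that $f^{-1}$ is statistically continuous: the only delicate case is a statistically convergent nonthin sequence with limit $\infty^X$, where for an open $V\ni p$ in $Y$ the closed set $Y\setminus V$ lies entirely in $X$ and is statistically compact, hence furnishes a legitimate neighbourhood $X^s\setminus(Y\setminus V)$ of $\infty^X$, forcing the relevant index set to have density zero.
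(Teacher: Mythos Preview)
Your proposal is correct. The paper states this corollary without proof, and your main route---invoking the Proposition to reduce the problem to showing $f$ is statistically closed, then pushing a statistically closed $B\subset Y$ through the chain ``statistically closed $\Rightarrow$ statistically compact $\Rightarrow$ $f(B)$ statistically compact $\Rightarrow$ $f(B)$ statistically closed''---is exactly the argument the assembled machinery points to. The two places you flag as delicate are handled correctly: the transfer $\Delta Y$ statistically closed $\Rightarrow$ $\Delta X$ statistically closed goes through via uniqueness of statistical limits (the earlier Corollary), and then the theorem on $X^s$ gives $\Delta X^s$ statistically closed, so Theorem~\ref{th4} applies. This is presumably the intended proof, since it is the route that actually consumes the statistical-sequential hypothesis on $X$.

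Your alternative direct verification that $f^{-1}$ is statistically continuous is also valid, and the observation that for open $V\ni p$ the set $Y\setminus V$ is closed and statistically compact in $X$ (hence $X^s\setminus(Y\setminus V)$ is a legitimate basic neighbourhood of $\infty^X$) is exactly the right one. It is worth noting that this second route does not appear to use the statistical-sequential assumption at all, so that hypothesis may in fact be redundant in the corollary as stated.
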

	\begin{note}
		From Note \ref{n} it follows that $X^s$ is not seperable completely metrizable if $X$ is uncountable.
	\end{note}
	\begin{remark}
		Variant of compactness by using other type of statistical convergence(e.g. T-statistical convergence \cite{11}, $\lambda$-statistical convergence \cite{13}, rough statistical convergence \cite{15} etc.) have been studied and secured similar results as in case of statistical compactness.
	\end{remark}

		\vfill\eject
		
	\end{document}